\documentclass[serif]{wiley-article}
\usepackage{url, bm, bbm}

\usepackage{hyperref}
\usepackage{natbib}

%\usepackage[
%backend=biber, 
%style=authoryear-comp, 
%sorting=ynt
%]{biblatex}
%\addbibresource{Bibliography.bib}

%%%%%%%%%%%%%%%%%%%%%%%%
%% FOR COMMENTS

%%%%%%%%%%%%%%%%%%%%%%%%

%Mathematical Definitions and Environments
%----------------------------------------------------------
%\usepackage{mathtools}

\numberwithin{equation}{section}

%----------------------------------------------------------------------
%----------------------------------------------------------------------
%BB mode and cal mode
%BB Mode:

\def\mc{\mathcal}
\def\kn{n/2}
\def\mb{\mathbb}

\def\ba{\,|\,}

\newcommand{\N}{\mb N}
\newcommand{\R}{\mb R}

\newcommand{\Z}{\mb Z}

\newcommand{\p}{\mb P}

\newcommand{\cC}{\mc C}

\newcommand{\cG}{\mc G}

\newcommand{\cK}{\mc K}
\newcommand{\cL}{\mc L}

\newcommand{\cN}{\mc N}

\newcommand{\cP}{\mc P}
\newcommand{\cR}{\mc R}

%----------------------------------------------------------------------
%----------------------------------------------------------------------
%frak mode

\newcommand{\bet}{\begin{theorem}}
\newcommand{\ent}{\end{theorem}}
\newcommand{\bepr}{\begin{proposition}}
\newcommand{\enpr}{\end{proposition}}
\newcommand{\bel}{\begin{lemma}}
\newcommand{\enl}{\end{lemma}}
\newcommand{\bep}{\begin{proof}}
\newcommand{\enp}{\end{proof}}

%----------------------------------------------------------------------
%----------------------------------------------------------------------
%new abbreviations

\newcommand{\E}[1]{\mathbb E\left [ #1 \right ]}

\renewcommand{\phi}{\varphi}
\renewcommand{\le}{\leqslant}
\renewcommand{\ge}{\geqslant}

\newcommand{\intd}[1]{\mathrm{d}#1}

\newcommand{\1}[1]{\, 1\! \left\{ #1 \right\} }

\newcommand{\diam}{\operatorname{diam}}
\newcommand{\wt}{\widetilde}

%\newcommand{\ul}{\underline}
%----------------------------------------------------------
%\setlength{\parindent}{0pt}
\allowdisplaybreaks 
\def\vp{\varphi}
\def\e{\varepsilon}
\def\D{D}
\def\Var{\ms{Var}}
\def\Cov{\ms{Cov}}
\def\s{\sigma}

\def\ms{\mathsf}
\def\a{\alpha}
\def\e{\varepsilon}
\def\ff{\infty}

\def\b{\beta}
\def\th{\theta}
\def\bt{\b_{\to, n}}
\def\Jt{J_\to}
\def\Eb{E_{\ms b}}
\def\Ed{E_{\ms d}}

\def\es{\varnothing}
\def\d{{\mathrm d}}

\def\bnz{\b_{i, n}}
\def\dnz{\De_{i, n}}

\def\btr{\b_{\to, n}^{r, s}}
\def\btr{\b_{\to, n}^{r, s}}
\def\bn{\b_n}
\def\de{\delta}
\def\De{\Delta}
\def\EE{\mathbb E}
\def\P{\mathbb P}

\def\one{\mathbbmss1}

\papertype{}

\title{
	Functional central limit theorems for persistent Betti numbers on cylindrical networks 
	}

	\author[1]{Johannes Krebs}
	\affil[1]{Institute of Applied Mathematics, Heidelberg University}

	\author[2]{Christian Hirsch}
	\affil[2]{Bernoulli Institute, University of Groningen}

	\corraddress{Heidelberg University, Im Neuenheimer Feld 205, 69120 Heidelberg, Germany.\\Bernoulli Institute, University of Groningen, Nijenborgh 9, 9747 AG Groningen, The Netherlands.}
	\corremail{krebs@uni-heidelberg.de\\ \hspace{3em}c.p.hirsch@rug.nl}
	\fundinginfo{This research was partially supported by the Deutsche Forschungsgemeinschaft (DFG), grant number KR 4977/1-1.}

	\runningauthor{Krebs and Hirsch}

\begin{document}

\maketitle
\begin{abstract}
\setlength{\baselineskip}{1.8em}
We study functional central limit theorems for persistent Betti numbers obtained from networks defined on a Poisson point process. The limit is formed in large volumes of cylindrical shape stretching only in one dimension. The results cover a directed sublevel-filtration for stabilizing networks and the \v Cech and Vietoris-Rips complex on the random geometric graph. 

The presented functional central limit theorems open the door to a variety of statistical applications in topological data analysis and we consider goodness-of-fit tests in a simulation study.
\medskip\\
\noindent {\bf Keywords:}\\ Functional central limit theorems; Goodness-of-fit tests; Graphical networks; Persistent Betti numbers; Stochastic geometry; Topological data analysis.\\
\noindent {\bf MSC 2010:} Primary: 60F05; 60D05; 60G55; Secondary: 60F10; 37M10; 60G60.
\end{abstract}

\section{Introduction}

\label{int_sec}
%
%GEN TDA
%
Topological data analysis (TDA) relies on an equally simple as appealing principle: Leverage invariants from algebraic topology to extract surprising insights from data. Although, a priori it is not at all apparent that this unconventional idea delivers added value, it has now been adapted in an impressively diverse range of domains, such as astronomy, materials science, biology and finance (\cite{rien, hiraoka,gidea2018topological}). Often TDA-based methods unearth relations of an entirely different nature than those found with more conventional methods.

%
%TEST
%
Although TDA has the power to produce compelling visuals, it is often not at all clear, whether the purported effects genuinely come from pivotal characteristics of the data set, or whether they are a mere incarnation of chance. In other words, despite its widespread dissemination across disciplines, for large parts of topological data analysis, the development of statistically sound testing procedures is still in its infancy.

%
%MC
%
One route towards devising goodness-of-fit tests is to rely on Monte Carlo methods (\cite{biscioTDA, turner}). In the most immediate approach, this would mean fixing a sampling window and comparing summary statistics of a given data set to those from a large number of samples under the null model. However, for large sampling windows, Monte Carlo methods require massive computational efforts, which are additionally tied to the specific sampling window. Hence, statistical tests that become asymptotically precise in large domains would be the ideal complement to the Monte Carlo tests. Although asymptotically precise tests decouple the computation of the test statistic under the null hypothesis from the size of the sampling window, a possible practical problem is that a robust estimation of the limiting variances could still entail simulations on relative large windows. An alternative could be to start from the precise integral expressions for the limiting variances and covariances derived by \cite{svane} and devise reasonable strategies to approximate them.

%
%CLT
%
A challenge in establishing functional CLTs is that TDA captures highly subtle characteristics of the underlying data, so that refined topological arguments are needed, before the problem becomes amenable to established limit-frameworks from stochastic geometry such as \cite{baryshnikov2005gaussian, penrose2001central}. Following this path made it possible to derive large-volume central limit theorems for (persistent) Betti numbers (\cite{krebs2019asymptotic, trinh2018central}). In a parallel line of research, asymptotics for the expected persistent Betti numbers could be established in the subcritical regime by \cite{BauerBetti}.

%
%NETWORKS
%
While previous works achieved pivotal progress towards analyzing point patterns through statistically well-established methodology, in a variety of applications the data is not given by mere points, but rather in the form of richer geometric objects. For instance, \cite{marron} analyze the directed network of brain arteries by working with the sub-level filtration in a distinguished direction. These questions motivate us to establish functional central limit theorems for TDA-based methods on network data that provide the foundation for rigorous statistical testing. Although asymptotic normality was shown previously, the new functional CLTs are a great advantage. Indeed, they provide practitioners with a large degree of freedom to construct goodness-of-fit tests tailored to their needs. By the continuous mapping theorem, the only requirement is that the summary statistic varies continuously in the persistence diagram. For instance, \cite{svane} use the accumulated persistence function to derive a TDA-based goodness-of-fit tests, which is then applied to a dataset from neuroscience.

%
%NETWORK--TREE
%
The class of possible random networks is of course enormous and accommodates a variety of entirely different structures. When thinking in the context of tree-shaped networks, highly relevant information is encoded in the merging pattern of different branches. That is, for trees growing into a preferred direction, TDA allows to track at which levels new branches appear and how long they survive before merging into an already existing branch. Stochastic geometry offers an ample variety of connection rules leading to tree-based networks. Among the most prominent examples are the minimal spanning tree, the Poisson tree and the directed spanning tree (\cite{mst, tree1, radSpan}). In fact, as we will see in Theorem \ref{T:GaussianLimitPoisson1}, the method for establishing the limit result does not hinge on the tree structure and remains true for more general networks.

 \begin{figure}[!htpb]
 \centering
 \includegraphics[trim={3cm 0cm 3cm 0cm}, clip, width=0.69\textwidth]{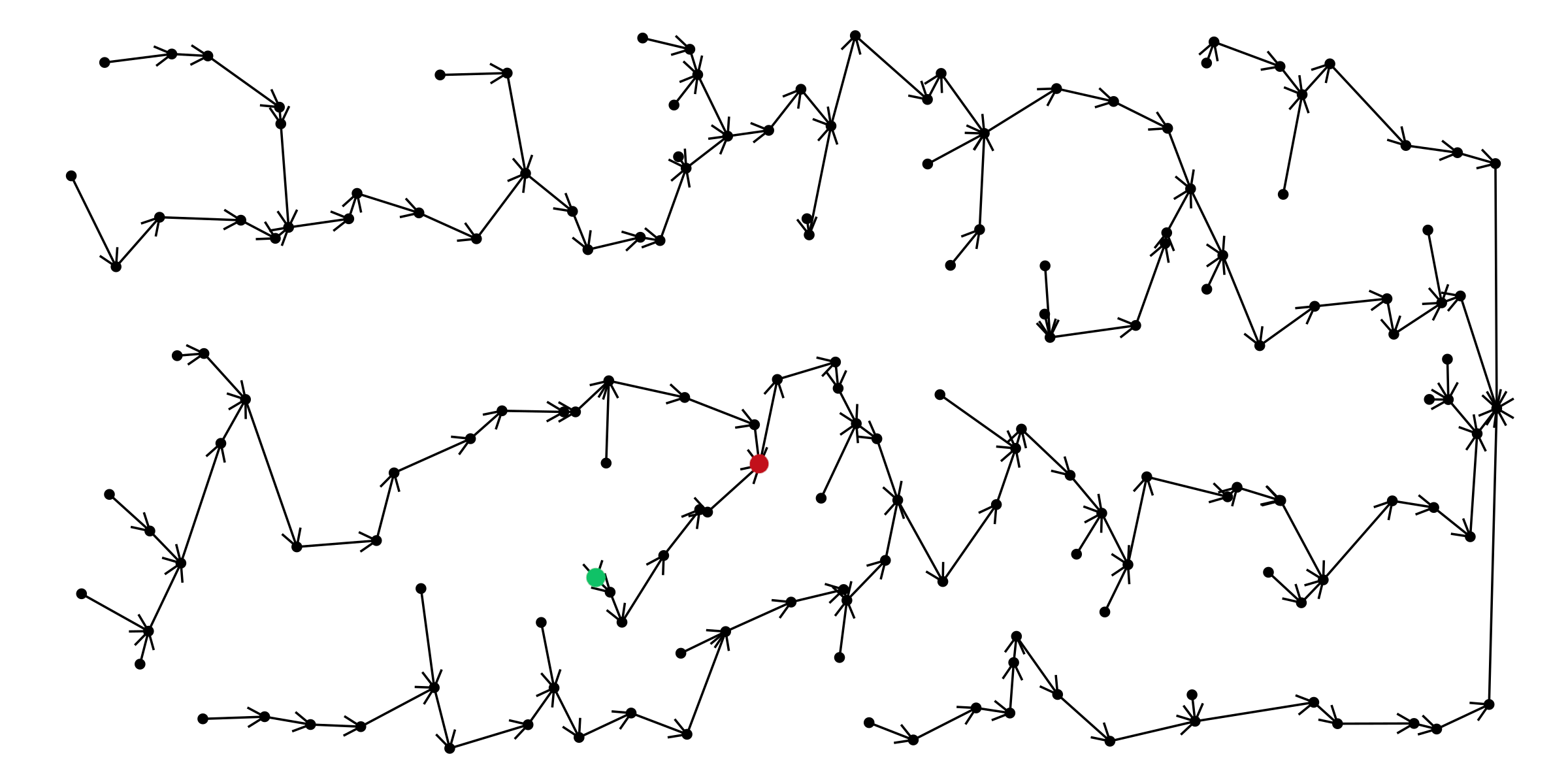}
 \caption{Directed spanning forest on a Poisson point process. At the green point a new component is born that dies at the red point.}
 \label{dsf_fig}
\end{figure}  

%
%NETWORK--GILBERT
%
Apart from these directed sublevel filtrations, we can also interpret the classical \v Cech (\v C) and Vietoris-Rips (VR) filtration of a point pattern within the framework of network-based TDA. Indeed, connecting any two points that are closer than a given distance $r$ leads to the {Gilbert graph} (\cite{gilbert, penrose}), one of the most prototypical examples of spatial random networks. {The \v C- and VR-complex can then be considered as extensions of the Gilbert graph, which aim at detecting higher-dimensional correlations in the random geometric graph. Here the \v C-complex features additionally the homotopy equivalence to the union of balls with radius $r$, centered at the points of the underlying point pattern.} In this setting, it is not just the merging pattern of components that is of interest. In fact, the trademark of TDA are the life times of loops, holes and higher-dimensional features. Despite the geometric richness of these features, they can all be thought of as features derived from the Gilbert graph through means of filtrations.

%
%CYLINDER
%
A pivotal assumption in our study is that the networks are confined to a cylindrical shape growing to infinity only in one of the space directions. This restriction is not simply a matter of convenience, but lies at the core of the proof of the functional CLT. Indeed, looking for instance at the { \v C- and }VR complex, we witness the emergence of long-range dependencies when passing through the critical radius of continuum percolation. A similar effect was observed by \cite{svane}; there, it was resolved by considering only bounded features.
 
 The proof of the functional CLT consists of two steps: showing multivariate normality and tightness. The proof of the multivariate normality relies on the stabilization framework of \cite{penrose2001central}. For the tightness, we resort to the Chentsov-type tightness criterion of \cite{bickel1971convergence} involving fourth-moments of block-increments of the persistent Betti numbers. Due to the intricate geometries, a particular challenge arises when verifying this condition for very small blocks. To that end, we rely on the technique of \cite{davydov}, { which is also detailed in \cite{bickel1971convergence}, }allowing to reduce the verification of the condition to blocks arranged in a grid of suitable spacing.

%Asymptotic normality Multivariate / Functional

The rest of the manuscript is organized as follows. First, Section \ref{mod_sec} introduces the precise conditions on the considered models and the functional CLTs. Next, Section \ref{ex_sec} elucidates that a variety of models from literature are covered by this framework. In Section \ref{sim_sec}, we present a simulation study illustrating that the asymptotic limiting results are already accurate in moderately-large sampling windows. The proof of the functional CLTs are outlined in Section \ref{proof_sec}, where we focus on tightness, which is the major methodological contribution of the present paper. The proof resides on two essential steps: 1) reduction to a grid, 2) moment bounds using suitable covariance bounds. Finally, in an appendix, we expound on how to adapt to the present setting previous arguments for multivariate asymptotic normality from literature.

%MODEL
%
\section{Model and main result}
\label{mod_sec}

%
%POINT PROC + DOMAIN
%
As outlined in Section \ref{int_sec}, the main results in this paper are about networks embedded in a cylindrical space $\R \times A $ for some non-empty compact convex set $A \subset \R^{d - 1}$. Furthermore, the networks are all based on suitable restrictions of a homogeneous Poisson point process $\cP = \{Z_i\}_{i \ge 1}$ in $\R \times A$. By proper rescaling, we may assume $\cP$ to have unit intensity. In the following, we consider networks constructed on the restriction $\cP_n$ of $\cP$ to the sampling window $W_n = [-n/2, n/2] \times A$. 

%
%TREE CLT
%
We start by presenting the functional CLT for the directed filtration on networks. This quantity has the advantage of involving only the appearance and merging of branches, so that we can state the result without needing to invoke the elaborate machinery behind general persistent homology. Since we are considering a distinguished direction, it is natural to say that an edge $\{Z_1, Z_2\}$ is \emph{outgoing} from a vertex $Z_1$ if $Z_2$ lies to the right of $Z_1$.

To make this precise, we assume that the considered network emerges from the underlying point process by a construction rule that is covariant in the $x$-direction: if $\mc E(Z_i, \cP_n)$ denotes the family of outgoing edges attached to the node $Z_i \in \cP_n$, then 
$$\mc E(Z_i + x, \cP_n + x) = \mc E(Z_i, \cP_n)$$
for every $x \in \R$.

If $Z_i, Z_j \in \cP_n$ are connected by an edge in the network and $Z_j$ lies to the left of $Z_i$, then we call $Z_j$ a \emph{parent} of $Z_i$. With this terminology, a point $Z_i \in \cP_n$ gives birth to a new component if $Z_i$ does not have parents. Writing $\pi_1: \R \times A \to \R$ for the projection to the $x$-coordinate, $\pi_1(Z_i)$ is the \emph{birth time} of that component. On the other hand, if $Z_i$ has parents in several different components, then only one of them survives, namely the one with the smallest birth time. Then, $\pi_1(Z_i)$ is the \emph{death time} of the other components. We refer to Figure~\ref{dsf_fig} for an example of points giving birth to a new component, or joining two or more components, respectively.

%
%BETTI DIR
%
This gives rise to a collection $\{(B_i, D_i)\}_{i \ge 1}$ of birth-death pairs and we write
$$\btr= \#\{i:\, (B_i, D_i - B_i) \in [-n/2, rn] \times [s, \ff)\}.$$
for the number of components that are born before time $rn$ and live for a time at least $s$.

Moreover, we assume the construction rule of the network to be stabilizing in the vein of \cite{baryshnikov2005gaussian,lee1997central}. Loosely speaking, the decision whether or not to put an edge between two points depends only on a bounded, possibly random neighborhood. More precisely, we assume that there exists an almost surely finite \emph{radius of stabilization} $R > 0$ such that 
\begin{align}
\label{stabEq1}
  \mc E\big(Z_i, ( \cP \cap( W_R \setminus W_1))\cup \mc A \cup \mc B\big) = \mc E\big(Z_i, (\cP \setminus W_1) \cup \mc B)
\end{align}
holds for all finite $\mc A\subset (\R \setminus W_R) \times A$, $\mc B \subset W_1 \times A$ and $Z_i \in \mc B$. We assume that $R$ has exponentially decaying tails in the sense that there exist $c > 0$ such that
\begin{align}
\label{stabEq2}
	\P(R > r) \le e^{-cr}
\end{align}
holds for all $r > 1$.
Henceforth, we track the life time of a feature until a  fixed life time $T$, so that the rescaled persistence diagram lives on the space $\Jt = [-1/2, 1/2] \times [0, T]$.
%
%TTREE CLT
%
\begin{theorem}[Functional CLT for directed filtrations]
\label{T:GaussianLimitPoisson1}
Let $T > 0$ and consider a network that is exponentially stabilizing in the sense of \eqref{stabEq1} and \eqref{stabEq2}. Then, the process
\[
		\Bigg\{\frac{ \btr- \E{\btr}}{\sqrt n}\Bigg\}_{(r, s) \in \Jt}
\]
converges in the Skorokhod topology to a mean-zero Gaussian process.
\end{theorem}

%
%BETTI INT
%
Next, we move to the \v C- and VR complex. As announced in Section \ref{int_sec}, the latter filtrations are intimately connected to the Gilbert graph. To that end, we first describe in detail the persistent Betti numbers, a key characteristic of TDA. At the foundation of persistent Betti numbers are the standard Betti numbers encoding information about loops, holes and higher-dimensional topological features of the underlying space. 

%
%BETTI DEF
%
Mathematically, the concept of features of different dimension is captured through the machinery of \emph{simplicial complexes} as described by \cite{harer}. Loosely speaking, a simplicial complex $\cK = \cK(\cP)$ constructed from $\cP$ is an abstract combinatorial structure consisting of points, edges, triangles and the corresponding higher-dimensional simplices. The goal is to define the adjacency structure in such a way that it resembles closely the topology of an object of interest. 

{Two of the most prominent examples are the \emph{\v C} and the \emph{VR complex}. Here, for $r > 0$ and $n \ge 1$, the first complex is given by
 \begin{align*}
				\cC_r(\cP_n) &:= \{ \s \subset \cP_n: \cap_{x\in\s} B(x,r) \ne \es \}, 
 \end{align*}
 where $B(x,r)= \{y\in\R^d: \|x - y\|\le r\}$ is the closed $d$-dimensional ball with radius $r$ and center $x$. The second complex is} defined as
 \begin{align*}
				\cR_r(\cP_n) &:= \{ \s \subset \cP_n: \diam(\s)\le 2r \}, 
 \end{align*}
where $\diam$ is the diameter of a measurable set. 

Building a free $\Z/2$-vector space on all simplices of a simplicial complex $\cK$, the key towards computing persistent Betti numbers is a specific map $\partial$ relating different dimensions, see \cite{harer}. The kernel $Z_q(\cK)$ and the image $B_q(\cK)$ of this map in dimension $q \le d$ are also known as \emph{cycle group} and \emph{boundary group}, respectively. Then, the $\Z/2$-vector-space dimension 
$$\b_q := \ms{dim}(Z_q(\cK) / B_q(\cK))$$
defines the \emph{$q$th Betti number}. 

%
%PERSIST INT
%
The examples of the {\v C- and the} VR-filtration point already the route towards the concept of persistent Betti numbers. Here, we do not only find a single simplicial complex, but an entire filtration parameterized by the radius $r$. This opens the door towards tracking the time points when certain features appear and when they disappear again.

More precisely, a \emph{filtration} is an increasing family of simplicial complexes $\cK = (\cK_r: r \ge 0) $. The corresponding $q$th persistent Betti number for parameters $(r, s) \in \De = \{ (r, s): 0\le r\le s\}$ is defined by
\begin{align}\label{Def:PersistentBetti0}
	\b^{r, s}_{\cK, q,n} \coloneq\dim \big(Z_q (\cK_r)\big) - \dim \big( B_q (\cK_s) \cap Z_q (\cK_r)\big).
\end{align}
Henceforth, we consider features of bounded birth- and death times. 
More precisely, we will deal with the following empirical processes: On the rectangle $J_{\cK} := [0, T]^2$, consider the $q$th persistence diagram 
$
		\ms{PD}_q := \sum_{i\ge 1} \delta{(B_i,D_i)},
$
which relies on the persistent Betti numbers from \eqref{Def:PersistentBetti0} given the \v C- or  VR-filtration built on the point cloud $\cP_n$,  in the sense that 
 $$    \b^{r, s}_{\cK, q,n} = \ms{PD}_q\big([0, r] \times [s, \infty)\big).$$
%
%CECH AND VIETORIS RIPS THM
%

\begin{theorem}[Functional CLT for the \v C- and the VR-filtration]
\label{T:GaussianLimitPoisson2}
Consider the \v C- or VR-complex. Let $1 \le q \le d - 1$. Then,
\[
	\Bigg\{\frac{\b^{r, s}_{\cK, q,n} - \EE[\b^{r, s}_{\cK, q,n}]}{\sqrt n}\Bigg\}_{(r, s) \in J_\cK}
\]
converge in the Skorokhod topology to  mean-zero Gaussian processes.
\end{theorem}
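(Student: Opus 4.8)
The strategy mirrors the general scheme announced in the introduction: the functional CLT follows from (i) convergence of all finite-dimensional distributions to a multivariate Gaussian law, and (ii) tightness of the rescaled process in the Skorokhod space over $J_\cK = [0,T]^2$. I would treat these two pillars separately, since the \v C- and VR-cases only differ in low-level combinatorial estimates and can be handled by the same argument.

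For the finite-dimensional convergence, I would fix parameters $(r_1,s_1),\dots,(r_m,s_m)\in J_\cK$ and show joint asymptotic normality of the vector $(\b^{r_j,s_j}_{\cK,q,n})_{j\le m}$ via the Cramér--Wold device, i.e.\ establish a univariate CLT for every linear combination $\sum_j a_j \b^{r_j,s_j}_{\cK,q,n}$. The key point is that each persistent Betti number admits an (add-one-cost) representation as a stabilizing functional of the Poisson process $\cP_n$: the increment in $\b^{r,s}_{\cK,q,n}$ caused by inserting a point $Z$ is determined, up to an exponentially small probability, by the configuration of $\cP$ in a bounded window around $Z$, because the critical radius $r \le T$ is fixed and bounded; here one invokes the exponential stabilization radius for the \v C/VR complex established in the cited literature (and re-derived for the cylindrical window in the appendix). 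One then applies the general CLT for stabilizing functionals of Poisson input, e.g.\ the framework of \cite{penrose2001central} or \cite{baryshnikov2005gaussian}, after verifying the required moment conditions (polynomial bounds on the number of simplices in a bounded box suffice). Because the cylindrical window $W_n = [-n/2,n/2]\times A$ grows only in one direction, the relevant "volume order" is $n$, which matches the $\sqrt n$ normalization. The limiting covariance structure is read off from the usual two-point add-one-cost formula; nothing about the tree structure is needed here, only locality.

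For tightness I would follow exactly the route sketched in the introduction: use the Chentsov/Bickel--Wichura-type criterion of \cite{bickel1971convergence}, which reduces tightness on $[0,T]^2$ to a bound of the form $\E{|\b_{\cK,q,n}(R) - \E{\b_{\cK,q,n}(R)}|^4/n^2} \le C\,|R|^{2}$ (fourth moments of centered block-increments controlled by the square of the block area), plus control of the behaviour on the boundary rectangles. The block-increment of a persistent Betti number over a rectangle $R=[r,r']\times[s,s']$ is again a local functional of $\cP_n$, concentrated with overwhelming probability on points whose stabilization regions meet a slab of width proportional to the side lengths of $R$; expanding the fourth centered moment yields a sum of mixed cumulants which one bounds using exponential decay of spatial correlations (covariance bounds coming from stabilization). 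The delicate case is small rectangles, where the naive fourth-moment bound is too weak; here I would import the device of \cite{davydov} (also spelled out in \cite{bickel1971convergence}): discretize the parameter rectangle into a grid whose spacing is chosen so that, on one hand, the grid has only polynomially many cells, and on the other hand, between grid lines the persistence diagram mass is so small that it can be absorbed by a union bound together with the exponential tail of $R$. Concretely, one shows that the number of birth--death pairs with birth or death time in an interval of length $\le n^{-\kappa}$ has, with high probability, negligibly few points per unit volume, reducing the continuous-parameter tightness to tightness on the grid, which the moment bound already handles.

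The main obstacle, as the authors themselves flag, is the small-block regime of the tightness argument: one must simultaneously (a) produce a genuine $|R|^2$-type fourth-moment bound that is uniform in $n$ and does not blow up as the block shrinks, and (b) rule out pathological concentration of persistence pairs near a fixed parameter value, both of which require careful quantitative use of the exponential stabilization radius and of the polynomial moment bounds on simplex counts in bounded regions. A secondary technical point is the restriction $1\le q\le d-1$: for these intermediate degrees the cycle and boundary groups in \eqref{Def:PersistentBetti0} are themselves local up to exponentially small corrections, whereas the top-degree case would involve global constraints; keeping all topological bookkeeping (how adding a point changes $\dim Z_q(\cK_r)$ and $\dim(B_q(\cK_s)\cap Z_q(\cK_r))$) within bounded windows is what makes the locality estimates usable, and this is where the confinement to a one-dimensionally growing cylinder is essential, since it prevents the long-range dependence that appears above the percolation threshold.
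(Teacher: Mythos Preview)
Your high-level architecture matches the paper's: finite-dimensional CLT via stabilization plus Cram\'er--Wold, tightness via the Bickel--Wichura criterion, and the Davydov grid-reduction for very small blocks. Two points, however, deserve correction.

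First, there is a genuine gap in the tightness argument. The Bickel--Wichura criterion only requires a bound of order $|E|^{1+\e}$ for some $\e>0$, not $|E|^2$; the paper obtains $|E|^{5/4}$. But to get \emph{any} exponent strictly above~$1$ you must control, for a single slab increment $D_{i,n}(E)$, the probability that the block $E$ contains at least one birth--death pair. This is where your outline is too optimistic: a feature with birth time in $\Eb$ and death time in $\Ed$ corresponds to a $q$-simplex $\s$ and a $(q{+}1)$-simplex $\wt\s$ with $(r(\s),r(\wt\s))\in E$, and generically $\s$ and $\wt\s$ share vertices (they lie in the same connected component). The joint law of $(r(\s),r(\wt\s))$ is therefore not a product, and one needs a H\"older-type estimate $\p\big((r(\s),r(\wt\s))\in E,\, r(\wt\s)>r(\s)\big)\le C|E|^{3/4}$ for iid uniform vertices. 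For the VR filtration this is straightforward, but for the \v C filtration it requires a delicate conditional-density computation (the paper's Proposition~\ref{Cech_prop}, building on \cite{divol2020}), handling separately the cases where $\Eb$ and $\Ed$ overlap or are disjoint. Your phrase ``concentrated on points whose stabilization regions meet a slab of width proportional to the side lengths of $R$'' does not capture this; the slab width is governed by the radius of stabilization, not by $|E|$, and the dependence on $|E|$ enters only through the filtration-time estimate. Without this ingredient the moment bound in your step~(a) cannot be closed.

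Second, your explanation of the restriction $1\le q\le d-1$ is off. The issue is not that top-degree homology is global: locality is furnished entirely by the cylindrical geometry (no percolation), independently of~$q$. The case $q=0$ is excluded only because every $0$-feature is born at time~$0$, so the process degenerates to a one-parameter process and the two-parameter tightness machinery would need trivial but notationally awkward modifications; the paper simply treats $q\ge 1$ to avoid this case distinction.
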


A slight nuisance of Theorem \ref{T:GaussianLimitPoisson2} is the special case $q = 0$, corresponding to the $0$-dimensional features. In contrast to the setting with $q \ge 1$, all features are born at time 0, thereby resulting in a one-dimensional instead of a two-dimensional process. In order to avoid cumbersome case distinction {in the proof of this result}, we discuss in detail the setting where $q \ge 1$, noting that the general techniques seamlessly extend to $q = 0$. 

Many steps of the proof of Theorems \ref{T:GaussianLimitPoisson1} and \ref{T:GaussianLimitPoisson2} are almost identical, since also in the \v C- and VR-filtration, it is possible to define the notion of a radius of stabilization. Indeed, let $m$ be the smallest positive integer such that all connected components of the filtration at level $T$ intersecting $W_1$ lie inside $W_m$ and set $R \coloneq m+2T$. Then, changing the Poisson process $\cP$ outside $W_R$ does not change features involving points in $W_1$. Moreover, if we let $S$ denote the smallest positive value such that $([S, S + T] \times A) \cap \cP = \es$, then the void probabilities of the Poisson point process imply that there exists $c > 0$ such that $\P(S > r) \le e^{-cr}$ holds for all sufficiently large $r$. Hence, the radius of stabilization $R$ also exhibits exponential tails in the present setting.

%
%EXAMP
%
\section{Examples for stabilizing networks}
\label{ex_sec}
In this section, we illustrate that the central stabilization assumption in Theorem \ref{T:GaussianLimitPoisson1} holds for a variety of spatial random networks. 

%
%GILBERT
%
The most basic example is the Gilbert graph, where two nodes of $\cP$ are connected if they are closer than a given distance $r > 0$. In particular, we may take $2\lceil r \rceil + 2$ as finite radius of stabilization. The same radius of stabilization can be used if we additionally delete edges according to a distance-dependent probability.

Next, we define the \emph{directed spanning forest} $G(\vp)$ on a vertex set $\vp \subset \R \times A$ by drawing a single edge from $x \in \vp$ to the closest Euclidean neighbor $y \in \vp$ to the right of $x$ (\cite{radSpan, coupier}). That is, 
$$y = \ms{argmin}_{\substack{y' \in \vp \\ \pi_1(y') \ge \pi_1(x)}}|x - y'|.$$
If $\varphi$ has a right-most point, then we formally define its outgoing edge to end at the same point. To define the radius of stabilization, we first let 
$R^* := \inf\{\pi_1(Z_i):\, \pi_1(Z_i) \ge 1\}$
denote the $x$-coordinate of the first point of $\cP$ lying to the right of 1. Then, 
$R := \ms{diam}(A) + R^*$
is a radius of stabilization since every point lying to the right of $R$ cannot be the right-closest Euclidean neighbor of a point in $W_1$. Furthermore, $R^*$ and thereby $R$ are exponentially stabilizing since for $r > 1$
$$\P(R^* > r) = \P(\cP \cap ([1, r]\times A) { = \emptyset}) = \exp(-(r - 1)|A|).$$

%
%SIMULATION
%
\section{Simulation study}

\label{sim_sec}
In this section, we illustrate the functional CLTs through simulations. First, Section \ref{dsf_sec} showcases at the hand of the directed spanning tree presented in Section \ref{ex_sec} that the asymptotic normality from Theorem \ref{T:GaussianLimitPoisson1} is already accurate in moderately large sampling windows. Second, in Section \ref{cech_sec}, we proceed to the \v Cech-filtration.
For the simulations, we rely on a homogeneous Poisson point process with intensity 2 in a $15 \times 5$ sampling window.

  %
  %DSF 
  %
   \subsection{Directed spanning tree}
   \label{dsf_sec}
   First, we illustrate the asymptotic normality from Theorem \ref{T:GaussianLimitPoisson1} at hand of the directed spanning forest. To begin with, Figure \ref{pd_net_fig} highlights the persistence diagrams for one realization of the network model based on a Poisson point process. In particular, only few components live for an exceptionally long time.
      \begin{figure}[!htpb]
 \centering
 \includegraphics[trim={0cm 17cm 40cm 0cm}, clip, width=0.59\textwidth]{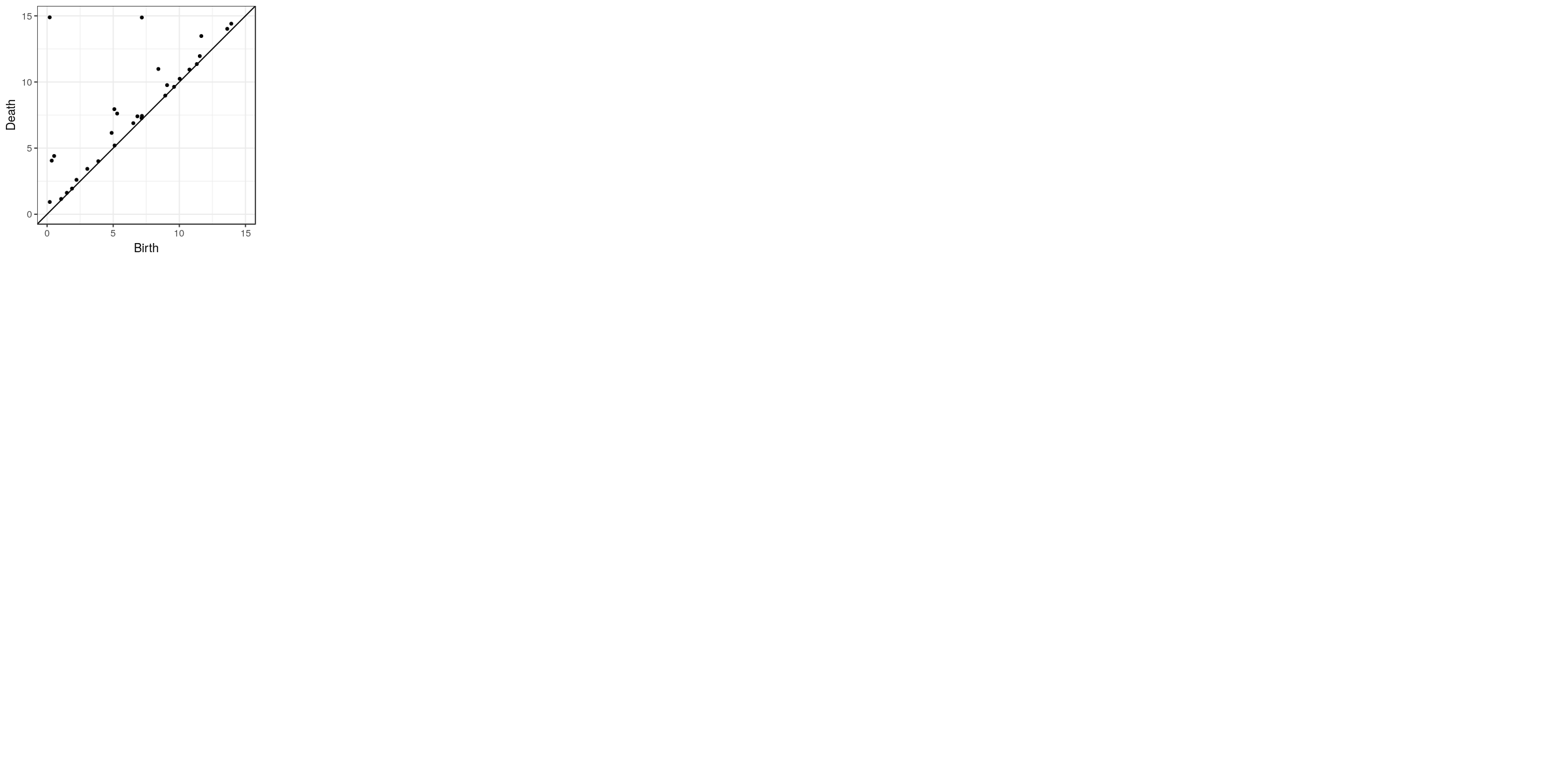}
 \caption{Persistence diagram for the directed spanning forest on a Poisson point process in a $15 \times 5$-window.}
 \label{pd_net_fig}
\end{figure}  
   
Next, we proceed from this exploratory analysis to potential applications of Theorem \ref{T:GaussianLimitPoisson1} in the context of goodness-of-fit tests. To that end, we introduce in Section \ref{ts_sec} a specific test statistic and look at its type 1 and 2 errors in Section \ref{gof_sec}.

\subsubsection{Test statistics}
\label{ts_sec}
 The functional central limit theorem shows that continuous functionals of the persistence diagram are asymptotically Gaussian. We illustrate this effect at hand of a specific scalar statistic derived from the persistence diagram also considered in the setting of $M$-bounded features (\cite{svane}).
 
 More precisely, we rely on the accumulated persistence function (\cite{biscioTDA}). That is, we aggregate the life times of all branches with birth times in a time interval $[0, r_{\ms L}]$ with $r_{\ms L} \le r_{\ms f}$:
$$\int_{[0, r_{\ms L}] \times [0, r_{\ms f}]} (d - b) \ms{PD}(\cP_n)({\rm d}b, {\rm d}d).$$
By the functional CLT, the statistic $T_{\ms L}$ is asymptotically Gaussian. We now illustrate that this distributional convergence becomes already clearly apparent on bounded sampling windows. To that end, we first compute the recentered and normalized statistics on the iid samples of the Poisson process. We compute the centered and normalized test statistics for 10,000 realizations of the Poisson model and then compare with the asymptotic Gaussian distribution. Then, Figure \ref{hist_cech_fig2} highlights that the resulting histogram is close to the density of a standard normal distribution. Also the Q-Q-plot in Figure \ref{hist_cech_fig2} supports this impression.

\begin{figure}[!htpb]
 \centering
 \includegraphics[trim={0cm 0cm 15cm 0cm}, clip, width=0.39\textwidth]{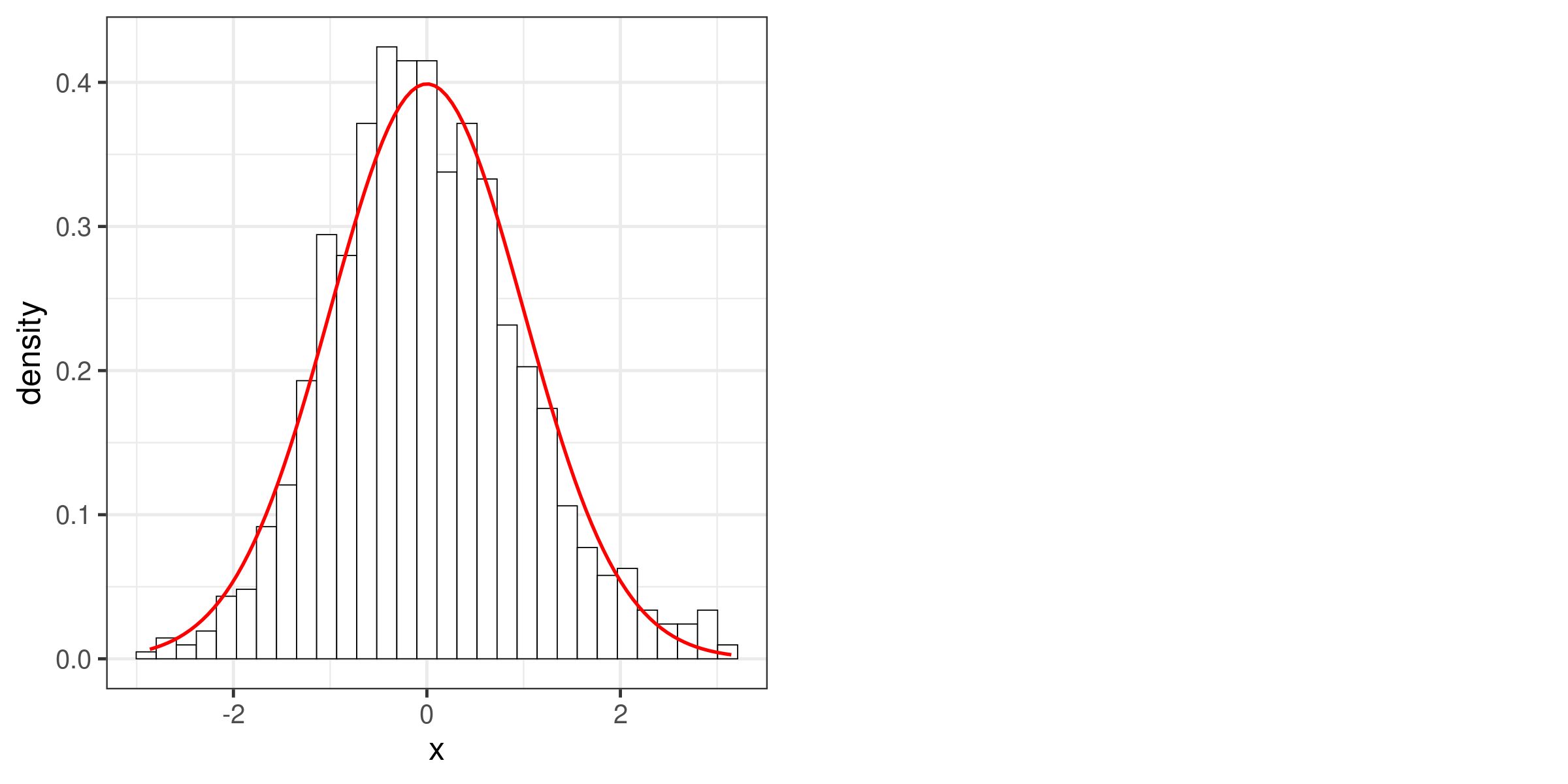}~\includegraphics[trim={0cm 0cm 15cm 0cm}, clip, width=0.39\textwidth]{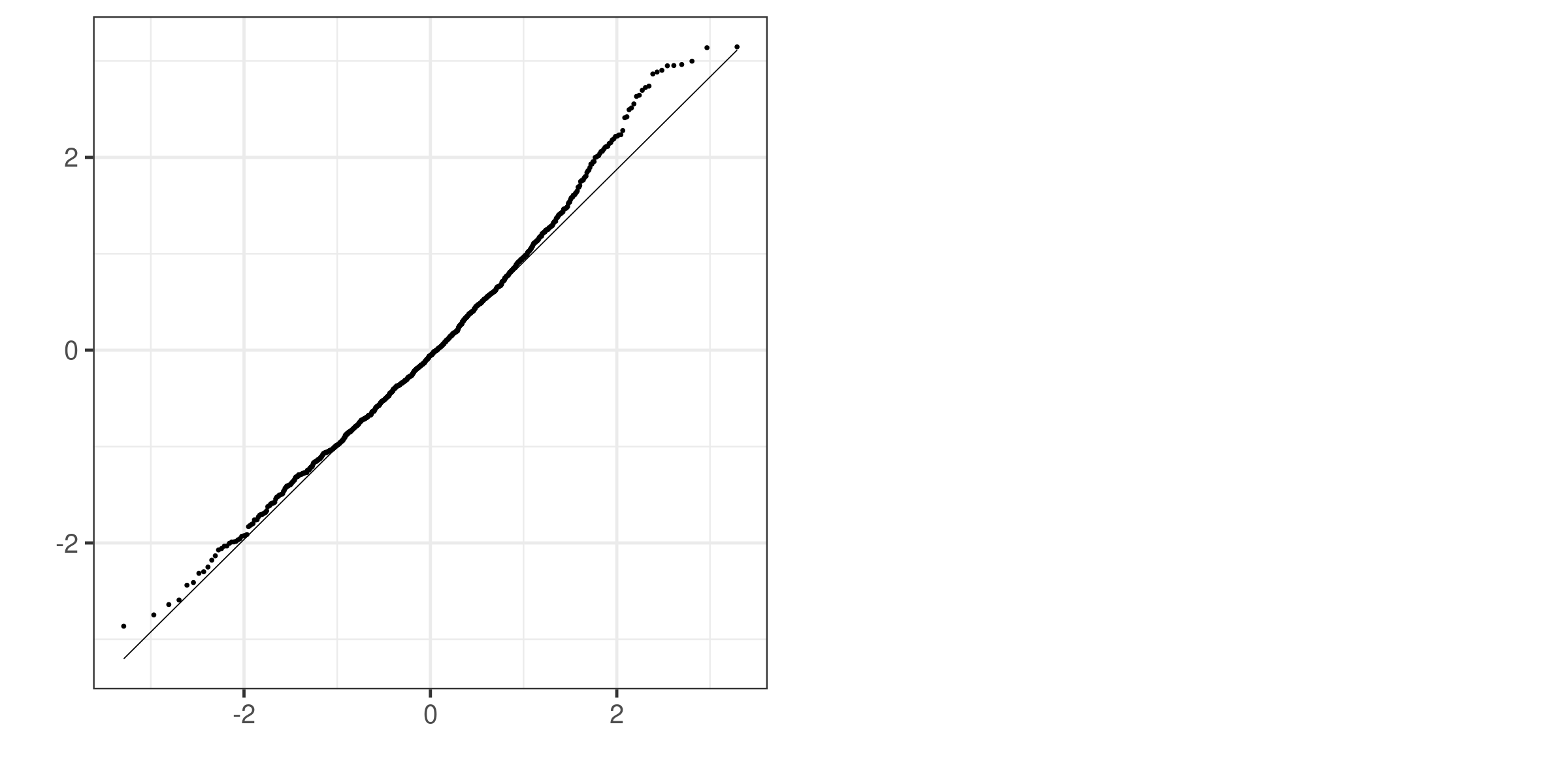}
 \caption{Histogram and Q-Q plot of normalized test statistics for the directed spanning forest.}
 \label{hist_cech_fig2}
\end{figure}

\subsubsection{Goodness-of-fit tests}
\label{gof_sec}
After illustrating that the test statistic $T_{\ms L}$ is asymptotically Gaussian, we now sketch how to derive goodness-of-fit tests. To that end, we extend the basic simulation setting from Section \ref{ts_sec} in two directions. First, we compute the TDA-based test statistic on Mat\'ern cluster and Strauss processes, which are point processes exhibiting a greater degree of clustering and repulsion, respectively. Second,  we also perform simulations if the intensity of the underlying point pattern is either 20\% lower or 20\% higher than the base case of intensity 2 from Section \ref{ts_sec}.

Our parameter choices for the attractive and repulsive point patterns are based on two principles. First, the intensity of these point patterns should match up with the intensity of the corresponding Poisson process. The intensity is the most fundamental characteristic of a point pattern and when fitting a model to data, it should reflect this property accurately. Second, to assess how strongly a TDA-based methodology can perform in challenging testing problems, we fixed parameters so that the resulting point patterns deviate only to a subtle degree from complete spatial randomness. To illustrate this point, Figure \ref{alt_fig} juxtaposes samples from the null model and samples from the alternatives.

\begin{figure}[!htpb]
 \centering
 \includegraphics[ width=0.99\textwidth]{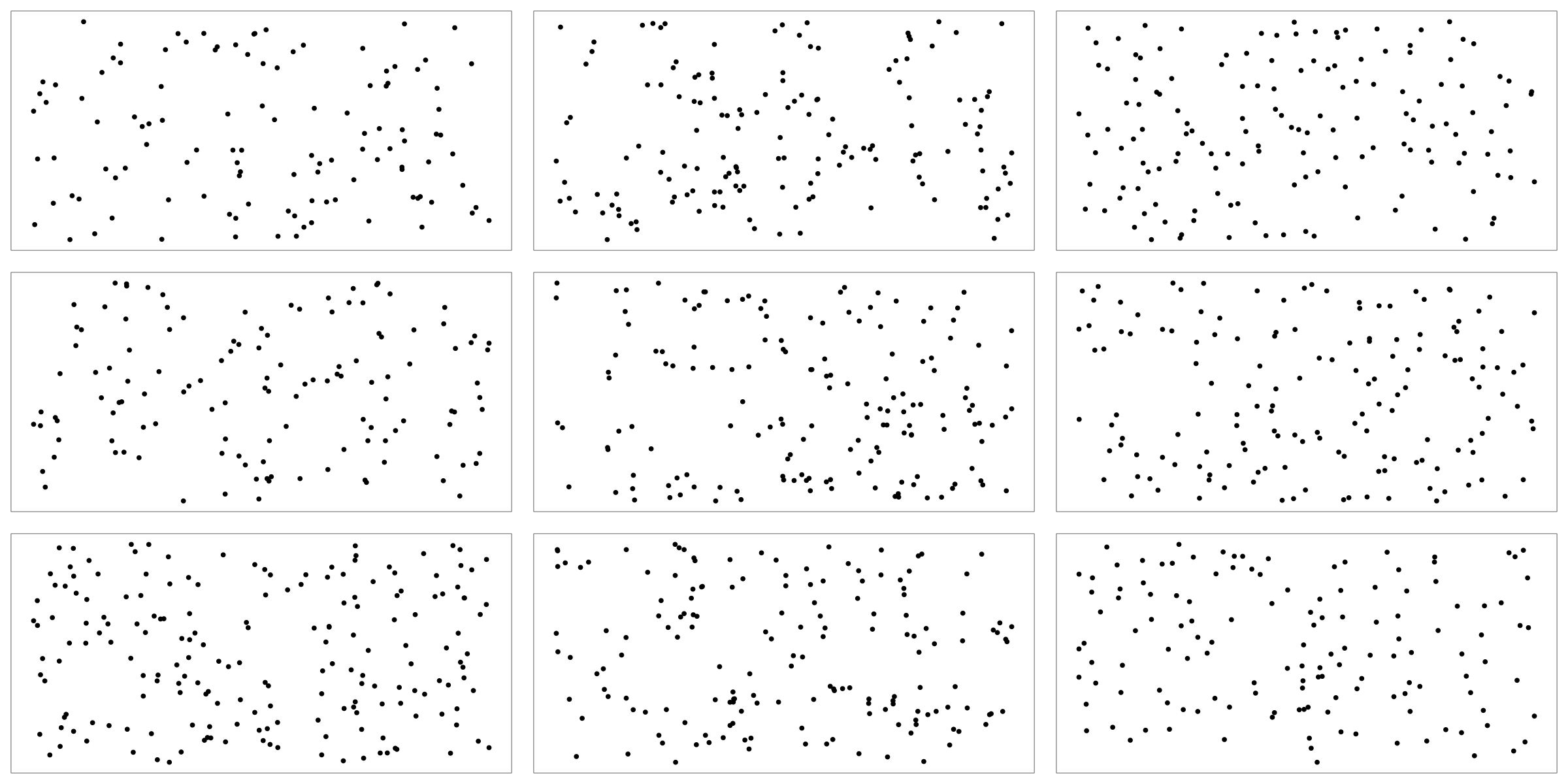}
 \caption{Samples from Poisson process (left), the Mat\'ern process (center) and the Strauss process (right). The intensity increases when moving from top to bottom.}
 \label{alt_fig}
\end{figure}

More precisely, we work with Mat\'ern cluster processes $\ms{MatC}(1.6, 0.5, 1)$, $\ms{MatC}(2, 0.5, 1)$ and $\ms{MatC}(2.4, 0.5, 1)$. 
 The first parameter indicates the intensity of the underlying Poisson parent process. Once this is fixed, we generate a $\ms{Poi}(1)$ number of offspring uniformly in a disk of radius $0.5$ around each parent. Concerning the Strauss process, we work with the parameter combinations $\ms{Str}(2.7, 0.6, 0.5)$, $\ms{Str}(4.0, 0.6, 0.5)$ and $\ms{Str}(5.4, 0.6, 0.5)$.  Here, the first parameter corresponds to the intensity of the associated interaction-free Poisson point process. Then, we fix the interaction parameter $\gamma$ and the interaction radius $r$ to be $0.6$ and $0.5$, respectively.

Table \ref{pow_net_tab} shows that when drawing 1,000 samples from the Poisson null model, the actual type 1 error is close to the nominal asymptotic 5 \% level. However, when moving to the alternatives, we see that the test power is in general rather small. Only for the Strauss process at low and moderate intensities the rejection rates depart noticeably from the given significance level. Hence, for the directed spanning forest, moving a bit away from a Poisson distribution of nodes induces only very subtle changes in the network structure that are difficult to detect with the persistence diagram. At first sight, this may come as a disappointment. However, we stress that network-based functional CLTs offer the potential to detect differences in the formation of the network topology, even if the underlying point pattern remains Poisson.

   \begin{table}[!htpb]
 \begin{center}
   \caption{Rejection rates for the test based on the sublevel filtration in the directed spanning forest.}
   \label{pow_net_tab}

 \begin{tabular}{rllc}
   & $\ms{Poi}$ & $\ms{MatC}$ & $\ms{Str}$ \\
\hline
   $\lambda = 1.6$   & 3.3\% & 5.3\% & 12.8\% \\
   $\lambda = 2.0$   & 4.9\% & 6.0\% & 7.8\% \\
   $\lambda = 2.4$   & 4.8\% & 6.3\% & 5.1\% \\
\end{tabular}
 \end{center}
   \end{table}

%
%GILBERT 
%
\subsection{\v Cech-filtration}
\label{cech_sec}
In Theorem \ref{T:GaussianLimitPoisson2}, we established a functional CLT that allows to deduce asymptotic normality for a wide variety of statistics derived from the persistence diagram. Now, we illustrate this behavior. To begin with, Figure \ref{cech_fig} highlights the persistence diagrams for 0- and 1-dimensional features associated with one realization of the Poisson point process.
\begin{figure}[!htpb]
 \centering
 \includegraphics[width=0.59\textwidth]{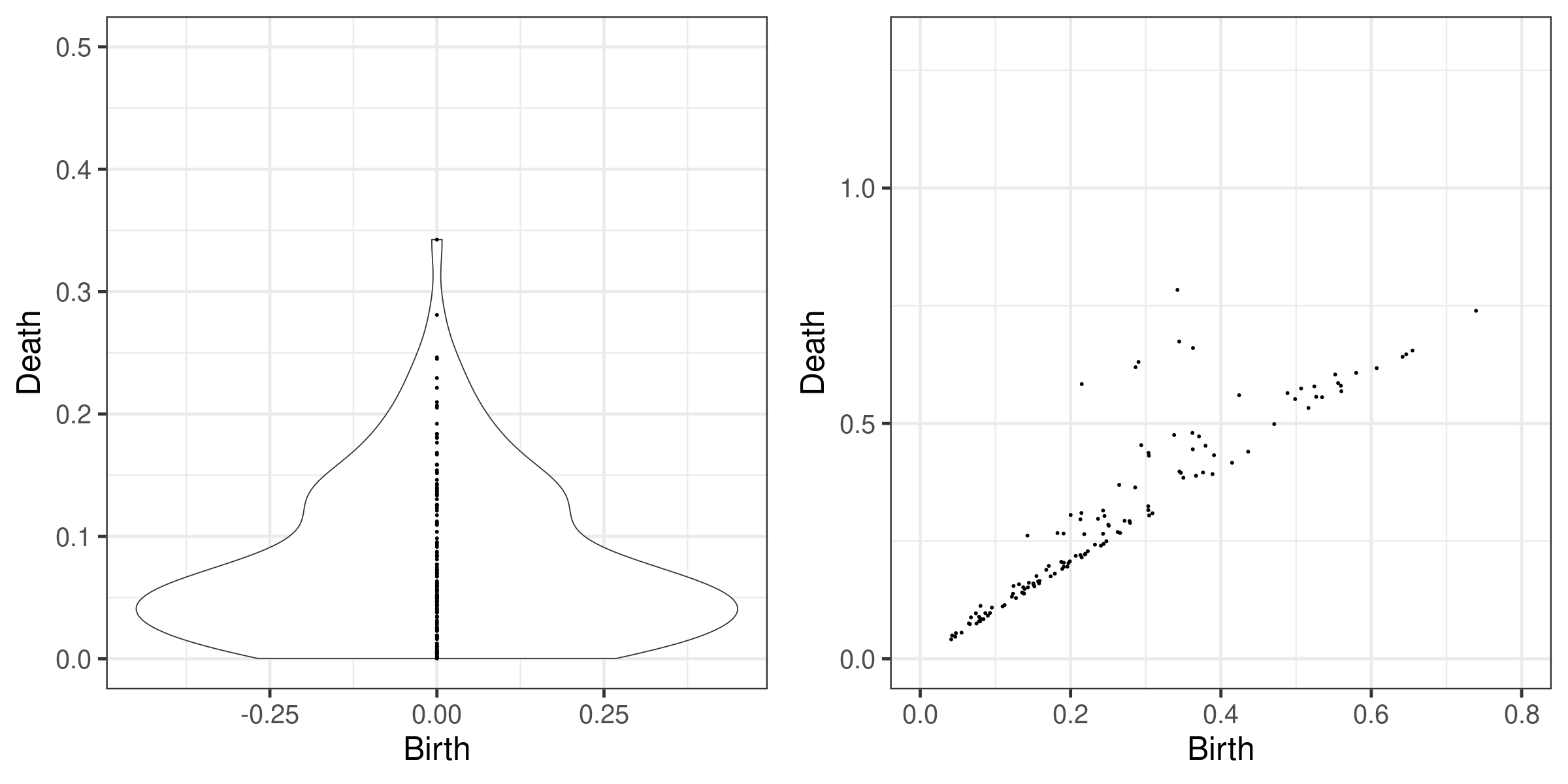}
 \caption{Persistence diagrams for 0-dimensional (left) and 1-dimensional (right) features of the \v{C}ech complex.}
 \label{cech_fig}
\end{figure}

\subsubsection{Test statistics}
Next, we introduce scalar test statistics derived from the persistence diagrams. For the 1-dimensional features, i.e., loops, we rely on the variant $T_{\ms L}$ of accumulated persistence function from Section \ref{ts_sec}. For features in dimension 0, we look at the integrated number of cluster deaths until a time $r_{\ms c}$, i.e., 
$$\int_0^{r_{\ms c}} \ms{PD}^0(\cP_n)([0, d]) {\rm d}d.$$
Since it is a continuous functional of the persistence diagram, it is asymptotically normal, so that we deduce from the functional CLT that it becomes Gaussian in large windows. When dealing with data, the intensity needs to be estimated and it was found in \cite{svane} that the following intensity-adapted variant leads to superior test powers:
\begin{align}
  \label{t0Def}
  T_{\ms C} := \frac1{\sqrt \lambda|W|}\int_0^{r_{\ms C} / \sqrt\lambda} \ms{PD}^0(\cP_n)([0, d]) {\rm d}d.
\end{align} 
Then, similarly to Section \ref{ts_sec}, the histogram and Q-Q plot in Figure \ref{hist_cech_fig} illustrate that the Gaussian approximation is already accurate in the moderately-sized window of the simulation study, even when relying on the more complex \v C-filtration.

For 1-dimensional features, we leverage the accumulated persistence function (\cite{biscioTDA}). That is, we aggregate the life times of all loops with birth times in a time interval $[0, r_{\ms L}]$ with $r_{\ms L} \le r_{\ms f}$:
$$\int_{[0, r_{\ms L}] \times [0, r_{\ms f}]} (d - b) \ms{PD}^1(\cP_n)({\rm d}b, {\rm d}d).$$
By the functional CLT, the cluster- and loop-based statistics $T_{\ms C}$ and $T_{\ms L}$ are asymptotically Gaussian. We now illustrate that this distributional convergence becomes already clearly apparent on bounded sampling windows. To that end, we first compute the recentered and normalized statistics on the iid samples of the Poisson process.

\begin{figure}[!htpb]
 \centering
 \includegraphics[width=0.59\textwidth]{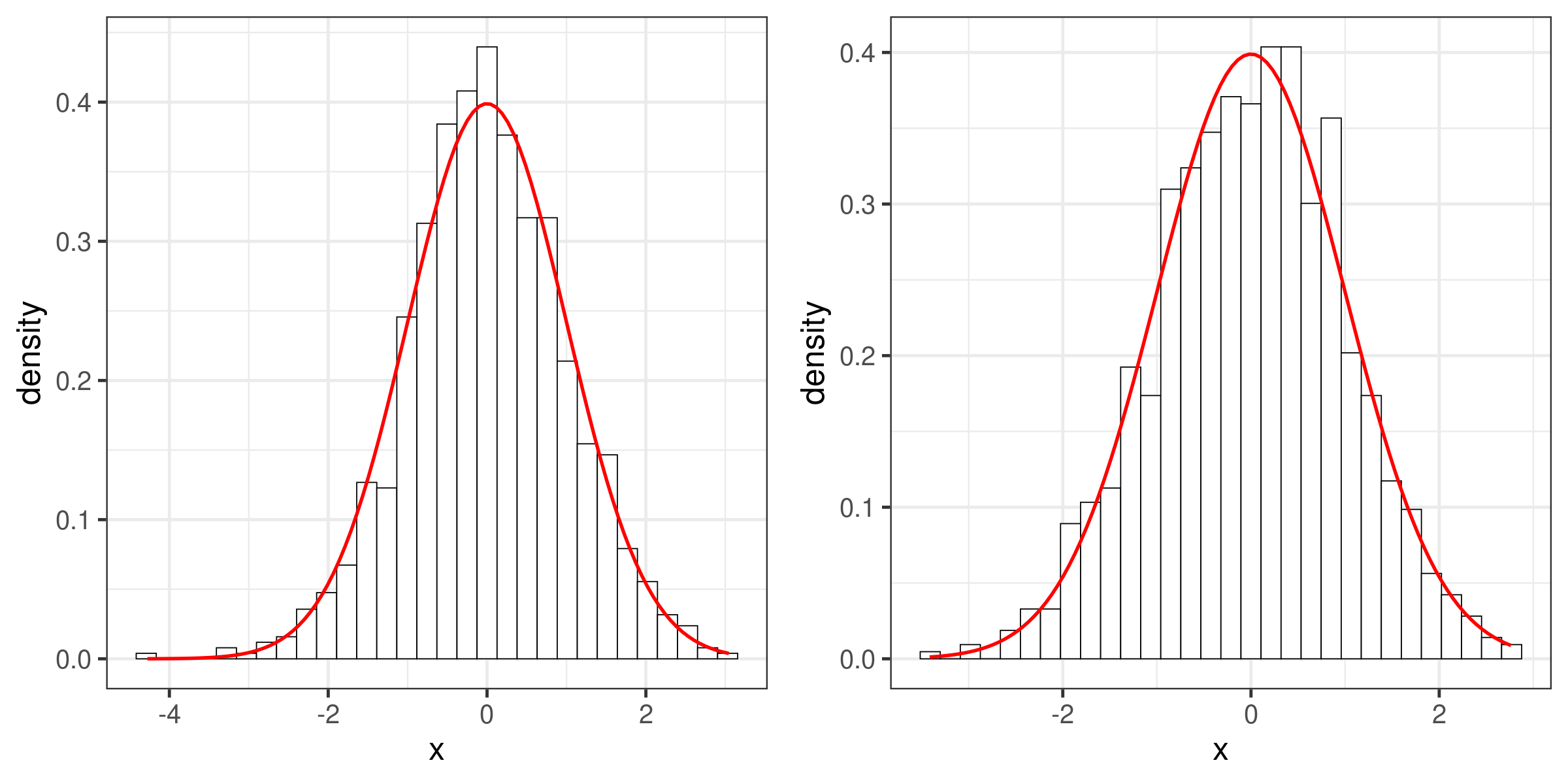}
 \includegraphics[width=0.59\textwidth]{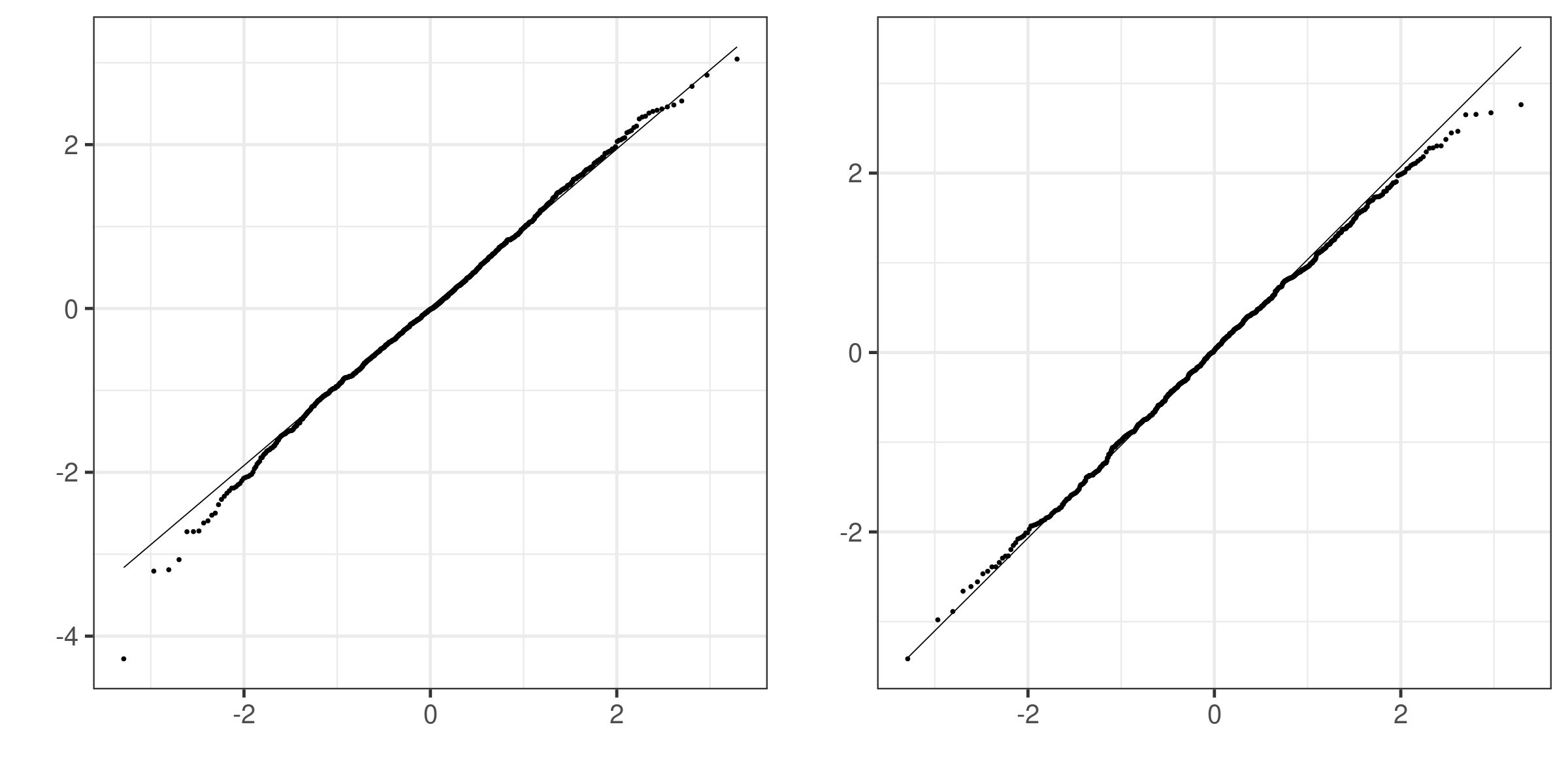}
 \caption{Histograms (top) and Q-Q plots (bottom) of normalized test statistics for 0-dimensional (left) and 1-dimensional (right) features against standard normal densities.}
 \label{hist_cech_fig}
\end{figure}
\subsubsection{Goodness-of-fit tests}
Again, similar to Section \ref{dsf_sec}, one possible application of the functional CLT would concern goodness-of-fit tests. To that end, replace the Poisson point process either by the Matérn cluster process $\ms{MatC}(2, 0.5, 1)$ or by the Strauss process $\ms{Str}(4.0, 0.6, 0.5)$. Again, we vary also the intensity.

Table \ref{pow_cech_tab} shows that when drawing 1,000 samples from the Poisson null model, the actual type 1 error is close to the nominal asymptotic 5 \% level. When moving to the type 2 errors, we see that the persistence-based tests have in general a reasonable power, in the light of the visual closeness of the underlying point patterns.  A more detailed inspections reveals noteworthy facets depending on the intensity that are particularly visible for the Strauss alternative.

Here, the performance of the cluster-based test statistics is sensitive to the intensity. For instance, in the high-intensity setting it achieves a power of $98.5\%$. This is consistent with the observation that at high intensities, the repulsion is strongly felt at a local level as it encourages strongly that points remain isolated for a certain amount of time before connecting to another component. On the other hand, the repulsion becomes more subtle when moving to lower intensities. Here, it appears that the effect is rather a greater degree of regularity on a larger scale, which can then be better detected by the loop-based test statistic. Finally, a seemingly paradoxical phenomenon is that the rejection-rate of the cluster-based test statistics at the low intensity is actually below the nominal level. We hypothesize that this could {be }due to the following reason. If the mean value under the null hypothesis and the alternative are very similar, then the higher degree of regularity of the Strauss process would lead to the test statistics falling outside the confidence band less often than it is the case for the more varying Poisson null model.

As a take-away, we recommend that one should in practice not rely on a single TDA-based test as the performance can vary substantially depending on the specific testing situation. This strengthens the case of having a fully-fledged functional CLT at one's disposal as this allows for a high degree of flexibility when it comes to constructing suitable test statistics.

   \begin{table}[!htpb]
 \begin{center}
	 \caption{Rejection rates for the test statistics $T_{\ms C}$ (left) and $T_{\ms L}$ (right) for the \v{C}ech complex.}
   \label{pow_cech_tab}

 \begin{tabular}{rllc}
   & $\ms{Poi}$ & $\ms{MatC}$ & $\ms{Str}$ \\
\hline
   $\lambda = 1.6$   & 5.4\% & 54.0\% & 2.3\% \\
   $\lambda = 2.0$   & 5.1\% & 46.8\% & 36.2\% \\
   $\lambda = 2.4$   & 4.1\% & 45.9\% & 98.5\% \\
\end{tabular}
\qquad 
 \begin{tabular}{rllc}
   & $\ms{Poi}$ & $\ms{MatC}$ & $\ms{Str}$ \\
\hline
   $\lambda = 1.6$   & 5.3\% & 40.5\% & 88.6\% \\
   $\lambda = 2.0$   & 4.4\% & 51.3\% & 43.7\% \\
   $\lambda = 2.4$   & 4.8\% & 50.5\% & 14.8\% \\
\end{tabular}
 \end{center}
   \end{table}

%%%
\section{Proof of Theorems \ref{T:GaussianLimitPoisson1} and \ref{T:GaussianLimitPoisson2} -- Tightness}
\label{proof_sec}

Proving a functional CLT involves two steps: (1) asymptotic normality of the multivariate marginals and (2) tightness. We note that the multivariate asymptotic normality was presented previously in a very similar context and holds in a more general setting than just for Poisson processes (\cite{hiraoka2018limit, krebs2019asymptotic}). In order to focus on the new information, we defer the corresponding proofs to Appendix~\ref{AppendixA}. 

Henceforth, we fix a value $1 \le q \le d - 1$ throughout the section. Many of the arguments do not differ at all between  $\b_{\cK, q, n}^{r,s}$ and $\bt^{r,s}$, so that we put $\bn^{r, s}$ to denote either of them. If there are differences, we point this out in detail. Furthermore, to ease notation, we write $\cP(W) := \#(\cP \cap W)$ for the number of points of $\cP$ in a domain $W \subset \R \times A$. We abbreviate constants whose value is not important by $c, c', C, C'$; so the values of $c, c', C, C'$ may change from line to line.

For one-dimensional c\`adl\`ag processes \cite{billingsley1968convergence} provides a highly convenient Chentsov-type moment condition. Since the persistent Betti numbers depend on two parameters, we will invoke the multi-dimensional extension in \cite{bickel1971convergence}  for c\`adl\`ag processes on $J$, where $J$ is either $J_{\cK}$ or $\Jt$ depending on the model. 
 Loosely speaking, \cite{bickel1971convergence} give a sufficient condition for tightness in terms of suitable moment estimates for the value of the process in blocks in $J$. Here,  a \emph{block} $E = \Eb \times \Ed $ is a product of intervals $\Eb, \Ed \subset [0, \infty)$ such that $\Eb \times \Ed \subset J$. For a function $f:\, J \to \R$ and a block $[b_-, b_+] \times [d_-, d_+]$, we define the increment
 $$ 	f([b_-, b_+] \times [d_-, d_+]):= f(b_+, d_+) - f(b_+, d_-) - f(b_-, d_+) + f(b_-, d_-).$$
 In particular, when applied to the setting of persistent Betti numbers, this statement reads as 
\[		 \b_n(E)= \b_n( E, \cP)= \#\{i\ge1:\, (B_i, D_i) \in E \},\]
where we consider all $q$-dimensional features $(B_i,D_i)$ obtained from the point cloud $\cP_n$ for a given filtration rule.
 
For the convenience of the reader, we now restate the condensed form of \cite[Theorem 2, Theorem 3]{bickel1971convergence} from \cite[Theorem 2]{lavancier}. To connect it to the specific problem of the present paper, we simplify to the two-dimensional setting and where the measure in the moment condition is a multiple of the Lebesgue measure, which we denote by $|\cdot|$ henceforth.
\begin{theorem}[Bickel \& Wichura]
	\label{bw_thm}
	For every $n \ge 1$, let $\{S_n(t)\}_{t \in J}$ be a c\`adl\`ag process whose finite-dimensional marginals converge to that of a process $\{X(t)\}_{t \in J}$. Suppose that $X(t)$ is continuous at the top-right corner of $J$ and that the finite-dimensional marginals of $S_n$ converge in distribution to those of $X(t)$. Suppose further that there exist $\e , C > 0$ such that 
	\begin{align}
		\label{lav_eq}
		\EE[S_n(E)^4] \le C |E|^{1 + \e}
	\end{align}
	holds for any block $E \subset J$. Then, in the Skorokhod topology, the process $\{S_n\}_{n \ge 1}$ converges in distribution to $X$.
\end{theorem}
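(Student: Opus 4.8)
The statement is a specialization of the multiparameter tightness criterion of \cite{bickel1971convergence}, so the plan is not to reprove that criterion but to record precisely how it is applied in the present form. First I would recall the general result in the shape used by \cite{lavancier}: a sequence of $D(J)$-valued processes is tight provided there is a finite measure $\mu$ on $J$ whose one-dimensional marginal distribution functions are continuous, together with exponents $\gamma_1,\gamma_2 \ge 0$ and $\beta > 1$, such that
\[
  \EE\big[|S_n(B)|^{\gamma_1}\,|S_n(C)|^{\gamma_2}\big] \le \big(\mu(B \cup C)\big)^{\beta}
\]
for every pair of neighboring blocks $B, C \subset J$; combined with convergence of the finite-dimensional distributions to those of a limit that is almost surely continuous at the top-right vertex of $J$, this yields convergence in the Skorokhod topology. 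Since $J$ is the bounded rectangle $J_\cK$ or $\Jt$, an affine change of coordinates transports the criterion from $[0,1]^2$, and the alternating-sum increment $f([b_-,b_+]\times[d_-,d_+])$ introduced above is exactly the block increment appearing in that framework.

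The reference measure I would use is a suitable multiple $\mu = K\,|\cdot|$ of Lebesgue measure on $J$: this is finite because $J$ is bounded, and its marginals are absolutely continuous, hence non-atomic, so its marginal distribution functions are continuous. The only computation is then to deduce the pairwise moment bound from the single-block hypothesis \eqref{lav_eq} with $\gamma_1 = \gamma_2 = 2$ and $\beta = 1 + \e$: for any blocks $B, C \subset J$, Cauchy--Schwarz together with $|B|,|C| \le |B \cup C|$ gives
\[
  \EE\big[S_n(B)^2 S_n(C)^2\big] \le \big(\EE[S_n(B)^4]\,\EE[S_n(C)^4]\big)^{1/2} \le C\,\big(|B|\,|C|\big)^{(1+\e)/2} \le C\,|B \cup C|^{1 + \e},
\]
and choosing $K$ with $K^{1+\e} \ge C$ turns the right-hand side into $\mu(B \cup C)^{1+\e}$. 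In particular the pairwise bound holds for neighboring blocks, so the general criterion applies; together with the assumed convergence of the finite-dimensional marginals and the continuity of the limit at the top-right corner, it delivers the claimed convergence in the Skorokhod topology.

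There is no substantial mathematical obstacle here, since the content lies entirely in \cite{bickel1971convergence}. The one point that requires genuine care is bookkeeping: matching the notion of ``block'', the increment operator, the location of the corner at which the limit must be continuous, and the role of the reference measure across \cite{bickel1971convergence}, \cite{lavancier}, and the formulation stated here, and in particular confirming that the single-block fourth-moment bound \eqref{lav_eq} is strictly stronger than (rather than merely analogous to) the pairwise neighboring-block bound in the original statement, so that the simplified hypothesis genuinely suffices.
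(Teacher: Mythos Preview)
Your proposal is correct and, in fact, goes slightly beyond what the paper does: the paper does not prove Theorem~\ref{bw_thm} at all but merely restates it as a tool, citing it as the condensed form of \cite[Theorems 2 and 3]{bickel1971convergence} given in \cite[Theorem 2]{lavancier}. Your sketch---recalling the general neighboring-block criterion with a continuous-marginal reference measure, taking $\mu$ to be a multiple of Lebesgue measure on the bounded rectangle $J$, and deducing the pairwise bound $\EE[S_n(B)^2 S_n(C)^2] \le \mu(B\cup C)^{1+\e}$ from the single-block fourth-moment hypothesis via Cauchy--Schwarz---is exactly the standard reduction and is sound.
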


 In fact, we rely on a very convenient variant of this condition mentioned in the remark following Theorem 3 in \cite{bickel1971convergence}, which allows to put a lower bound on the size of the blocks. This means we show that it suffices to consider blocks from the sequence of grids $(T_n)_n$ with { $T_n = \{ (i/[ n^{\a}]-1/2, kT/[n^{\a}]): [n^{\a}]\ge i, k\ge 0 \}$ for the directed filtration and $T_n = \{ (iT/[ n^{\a}], kT/[n^{\a}]): [n^{\a}]\ge i, k\ge 0 \}$ for the \v C- and VR-filtration}, $n \ge 1$, where we henceforth fix $\a = 3/4$.
To state this variant precisely, for $\de > 0$ we first recall the modulus of continuity of a multi-parameter c\`adl\`ag function $z\colon [0, T]^2\rightarrow \R$
\[
			\omega'_\de(z) := \inf_\Gamma \max_{G\in\Gamma} \sup_{s, t\in G} |z(t)-z(s)|, 
\]
where the infimum extends over all $\de$-grids $\Gamma$ in $J$. 
Now, we proceed essentially in two steps as follows:

 (1) We show in Proposition~\ref{grid_prop} that it suffices to compute the modulus of continuity on a sequence of equidistant grids $(T_n : n \ge 1)$ whose union lies dense in $J$. So instead of considering the modulus of continuity of the centered process
 $$\bar\bn^{r, s} := \bn^{r, s} - \EE[\bn^{r, s}],$$ 
 we study the modulus of continuity of the process $\bar\bn$ restricted to $T_n$, i.e, $\bar\bn |_{T_n}$. 
 
 (2) We verify the moment condition of \cite{bickel1971convergence} for the process $\bar\bn |_{T_n}$.
 
\noindent\textit{Step 1 -- Reduction to a grid.} 
To reduce to a grid, we rely on two auxiliary results from literature. First, we recall the definition of the \emph{filtration time} 
$$r(\s):=\inf\{r > 0:\, \s \in \cK_r\}$$
as the first time $r > 0$ where a simplex $\s \in \cK_T$ is contained in the complex $\cK_r$. In \cite[Lemma~6.10]{divol2020} it is shown that both in the \v C- and the VR-filtration, the filtration times of $p \ge 2$ iid uniform points in a ball admit a bounded and continuous density on $\R_+$, which is bounded above by some $c_{1, d} > 0$.

The second auxiliary result is the Slivnyak-Mecke formula \cite[Theorem 4.4]{lastPenrose} -- a highly flexible tool for computing expectations with respect to a Poisson point process. To make the presentation more accessible, we extend the Poisson point process $\cP$ to the entire space $\R^d$ and state the Slivnyak-Mecke formula for this specific case.
\begin{theorem}[Slivnyak-Mecke]
	\label{smf}
	Let $q \ge 0$ and $f:\, \R^{d(q + 1)} \to [0, \ff)$ be measurable. Then,
	\begin{align}
		\label{meck_eq}
		\EE\Big[\sum_{\substack{Z_{i_0}, \dots, Z_{i_q} \in \cP \text{ pw. distinct}}}f(Z_{i_0}, \dots, Z_{i_q})\Big] = \int_{\R^{d(q + 1)}} f(z_0, \dots, z_q) \d(z_0, \dots, z_q).
	\end{align}
\end{theorem}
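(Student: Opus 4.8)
The plan is to obtain \eqref{meck_eq} by induction on $q$ from the single-point Mecke equation, which characterizes the Poisson point process: for every measurable $g\colon \R^d\times\mathbf N\to[0,\ff]$, where $\mathbf N$ denotes the space of locally finite counting measures on $\R^d$,
$$\EE\Big[\sum_{x\in\cP} g(x,\cP)\Big] = \int_{\R^d}\EE\big[g(x,\cP+\delta_x)\big]\,\d x;$$
the reference measure on the right is Lebesgue precisely because $\cP$ has unit intensity. Taking for $g$ the given function $f$ (which does not depend on the configuration) immediately settles the base case $q=0$, namely $\EE[\sum_{x\in\cP}f(x)] = \int_{\R^d} f(x)\,\d x$.

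For the induction step, assume \eqref{meck_eq} holds with $q$ replaced by $q-1$. Splitting off the last coordinate, I would write the left-hand side of the $q$-level identity as $\EE[\sum_{x\in\cP} h(x,\cP)]$, where $h(x,\mu) := \sum_{y_0,\dots,y_{q-1}\in\supp\mu\ \text{pw. distinct},\ \ne x} f(y_0,\dots,y_{q-1},x)$. The map $h$ is jointly measurable, so the single Mecke equation turns this into $\int_{\R^d}\EE[h(x,\cP+\delta_x)]\,\d x$. For Lebesgue-a.e.\ $x$ the point $x$ is not already an atom of $\cP$ — the intensity measure is diffuse — hence the atoms of $\cP+\delta_x$ other than $x$ are exactly the atoms of $\cP$, and therefore $h(x,\cP+\delta_x) = \sum_{y_0,\dots,y_{q-1}\in\cP\ \text{pw. distinct}} f(y_0,\dots,y_{q-1},x)$. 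Applying the induction hypothesis, for each fixed $x$, to the nonnegative measurable function $(y_0,\dots,y_{q-1})\mapsto f(y_0,\dots,y_{q-1},x)$ gives $\EE[h(x,\cP+\delta_x)] = \int_{(\R^d)^q} f(y_0,\dots,y_{q-1},x)\,\d(y_0,\dots,y_{q-1})$. Integrating over $x$ and invoking Tonelli — legitimate since all integrands take values in $[0,\ff]$ — yields \eqref{meck_eq}.

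I do not expect a genuine obstacle here; the statement is classical (it is exactly \cite[Theorem 4.4]{lastPenrose}) and the only points demanding care are the a.s.\ identification of $\cP+\delta_x$ minus $x$ with $\cP$ for Lebesgue-a.e.\ $x$, the measurability of the auxiliary map $h$, and the repeated appeals to Tonelli. If one prefers a self-contained argument not invoking the Mecke equation, one can instead first assume $f$ is supported in a bounded box $B$, condition on $\cP(B)=N$, use that given $N=k$ the points of $\cP\cap B$ are i.i.d.\ uniform on $B$ so that the conditional expectation of the tuple-sum equals $k(k-1)\cdots(k-q)\,|B|^{-(q+1)}\int_{B^{q+1}}f$, average over $N\sim\poi(|B|)$ using the falling-factorial moment $\EE[N(N-1)\cdots(N-q)] = |B|^{q+1}$, and then let $B\uparrow\R^d$ by monotone convergence; the case of general nonnegative $f$ follows by truncation.
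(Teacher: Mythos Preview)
Your proof is correct; the induction from the single-point Mecke equation is the standard route, and your care with the a.e.\ identification of $(\cP+\delta_x)\setminus\{x\}$ with $\cP$ and with Tonelli is appropriate. Note, however, that the paper does not actually prove this statement: it is quoted as \cite[Theorem~4.4]{lastPenrose} and used as a tool, so there is no ``paper's own proof'' to compare against. Your argument is in fact precisely the one given in that reference.
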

Note that if $f$ is supported on $M^q$ for some compact $M \subset \R^d$, then the right-hand side in \eqref{meck_eq} can be expressed as $|M|^{q + 1}\EE[f(Z'_0, \dots, Z'_q)]$ with the $Z'_0, \dots, Z'_q$ iid.~uniform on $M$.

\begin{proposition}[Reduction to grid]
\label{grid_prop}
Let $\e, \de >0$. Then, there exists a deterministic $n_0 = n_0(\e, \de)\in\N$ such that almost surely
	$$\sup_{n \ge n_0}n^{-1/2}|\omega'_\de(\bar\b_n)- \omega'_\de(\bar\b_n|_{T_n})| \le \e.$$
\end{proposition}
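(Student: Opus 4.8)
The two moduli differ only through the behaviour of $\bar\b_n$ \emph{inside} the cells of the grid $T_n$, and since $T_n$ has mesh of order $n^{-\a}$, which is smaller than $\de$ for $n$ large, I plan to reduce the statement to showing that the oscillation of $\bar\b_n$ over a single cell of $T_n$ is $o(\sqrt n)$, uniformly over all cells and all large $n$. Writing $\bar\b_n=\b_n-\EE[\b_n]$ I treat the monotone random part $\b_n$ and the deterministic centering $\EE[\b_n]$ separately: for a cell $Q=[r_-,r_+]\times[s_-,s_+]$ of $T_n$, monotonicity of $\b_n$ in each coordinate gives
\[
 \sup_{(r,s),(r',s')\in Q}\bigl|\b_n(r,s)-\b_n(r',s')\bigr|=\b_n(r_+,s_-)-\b_n(r_-,s_+)\le N_Q^{\mathrm b}+N_Q^{\mathrm d},
\]
where $N_Q^{\mathrm b}$ (resp.\ $N_Q^{\mathrm d}$) is the number of $q$-dimensional features whose birth (resp.\ death) parameter lies in the corresponding edge of $Q$, an interval of length $O(n^{-\a})$; the same bound holds in expectation for $\EE[\b_n]$.

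\textbf{Step A: reduction to within-cell oscillation.} For c\`adl\`ag $z\colon J\to\R$ and $\rho>0$ set $w_\rho(z):=\sup\{|z(s)-z(t)|:\|s-t\|_\infty\le\rho\}$. A routine grid-snapping argument shows that if $\rho<\de$ then $|\omega'_\de(z)-\omega'_\de(z|_{T_n})|\le C\,w_{\rho}(z)$ whenever $T_n$ has mesh $\le\rho$: given a near-optimal $\de$-grid, one moves its vertices onto $T_n$ (for $n$ large this is again a $\de$-grid, up to an arbitrarily small change of $\de$ which is immaterial since the tightness criterion only uses $\de\to0$), and each block then changes by at most one mesh-width. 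Applying this with $\rho=\mathrm{mesh}(T_n)\le cn^{-\a}$ and using the decomposition above, it suffices to bound $\max_Q\bigl(N_Q^{\mathrm b}+N_Q^{\mathrm d}\bigr)$ and $\max_Q\EE\bigl[N_Q^{\mathrm b}+N_Q^{\mathrm d}\bigr]$, the maxima running over the $O(n^{2\a})=O(n^{3/2})$ cells of $T_n$.

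\textbf{Step B: the centering term.} By monotonicity, $\EE[N_Q^{\mathrm b}]$ equals the expected number of $(q+1)$-simplices (for the directed filtration: Poisson points) whose filtration time (resp.\ birth $x$-coordinate) falls in an interval of length $O(n^{-\a})$, and likewise for $\EE[N_Q^{\mathrm d}]$. By the Slivnyak--Mecke formula (Theorem~\ref{smf}), fixing the first vertex in $W_n$ and noting the remaining vertices must then lie in a ball of radius $2T$, the bounded density $c_{1,d}$ of filtration times of i.i.d.\ uniform points in a ball from \cite[Lemma~6.10]{divol2020} (and the analogous bounded density of birth/lifetime parameters for the directed filtration) yields $\EE[N_Q^{\mathrm b}]+\EE[N_Q^{\mathrm d}]\le C\,n\cdot O(n^{-\a})=O(n^{1-\a})$. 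With $\a=3/4$ this is $O(n^{1/4})=o(\sqrt n)$, deterministically and uniformly over cells.

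\textbf{Step C: the random term and Borel--Cantelli.} For a fixed cell $Q$, $N_Q^{\mathrm b}$ is dominated by the number of $(q+1)$-simplices with filtration time in the length-$O(n^{-\a})$ interval of $Q$ (for the directed filtration, simply by $\cP$ of a spatial strip of length $O(n^{1-\a})$, which is Poisson), and $N_Q^{\mathrm d}$ similarly. These are sums of $O(n)$ local contributions, each of mean $O(n^{-\a})$ and of bounded spatial range $2T$, so a concentration estimate (Poisson tail bounds in the directed case; a finite-range/moment concentration inequality for the simplex counts in the \v C- and VR-case) gives $\P\bigl(N_Q^{\mathrm b}+N_Q^{\mathrm d}>\tfrac12\e\sqrt n\bigr)\le e^{-cn^{\g}}$ for some $\g>0$. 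A union bound over the $O(n^{3/2})$ cells gives $\P\bigl(\max_Q(N_Q^{\mathrm b}+N_Q^{\mathrm d})>\tfrac12\e\sqrt n\bigr)\le Cn^{3/2}e^{-cn^{\g}}$, which is summable in $n$. By Borel--Cantelli, almost surely $\max_Q(N_Q^{\mathrm b}+N_Q^{\mathrm d})\le\tfrac12\e\sqrt n$ for all $n$ beyond some threshold; combining with Steps A--B (the $o(\sqrt n)$ centering contribution is absorbed) gives $n^{-1/2}\bigl|\omega'_\de(\bar\b_n)-\omega'_\de(\bar\b_n|_{T_n})\bigr|\le\e$ for all sufficiently large $n$, which is the assertion (this is exactly the form in which the estimate enters the tightness proof, where one needs the $\limsup$ over $n$ of this quantity to vanish).

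\textbf{Main obstacle.} The technical heart is Step~C for the \v C- and VR-filtration: unlike the directed case, the number of simplices with filtration time in a short interval is not itself a Poisson variable, so one must combine the bounded-density input of \cite[Lemma~6.10]{divol2020} with a genuine concentration estimate that is simultaneously (i) stretched-exponential, hence strong enough to survive a union bound over $\sim n^{3/2}$ cells, and (ii) summable over $n$. Step~A is routine but requires care because a $\de$-grid is required to have spacing exceeding $\de$, which is handled by an infinitesimal perturbation of $\de$.
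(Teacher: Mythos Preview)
Your approach differs substantially from the paper's, and while it could in principle be completed, it introduces an unnecessary probabilistic step that leaves real gaps.

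The paper invokes \cite[Corollary~2]{davydov}, which exploits the coordinate-wise monotonicity of $\b_n$ far more sharply than you do. The observation you are missing is this: if $(r,s)$ lies in a grid cell $Q$ with extreme corners $(r_-,s_+)$ and $(r_+,s_-)$, then monotonicity of both $\b_n$ and $\EE[\b_n]$ gives the sandwich
\[
\bar\b_n(r_-,s_+)-\mu_Q\;\le\;\bar\b_n(r,s)\;\le\;\bar\b_n(r_+,s_-)+\mu_Q,\qquad \mu_Q:=\EE\bigl[\b_n(r_+,s_-)-\b_n(r_-,s_+)\bigr].
\]
Thus $\bar\b_n(r,s)$ is trapped between two \emph{grid values} of $\bar\b_n$ up to the \emph{deterministic} error $\mu_Q$. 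The random part of the within-cell oscillation is therefore already accounted for by $\bar\b_n|_{T_n}$, and only $\max_Q\mu_Q$ contributes to $|\omega'_\de(\bar\b_n)-\omega'_\de(\bar\b_n|_{T_n})|$. The paper then shows $n^{-1/2}\max_Q\mu_Q=o(1)$ via precisely your Step~B computation (Palm theory for the directed filtration; Slivnyak--Mecke plus the bounded filtration-time density for \v C/VR), which immediately yields the deterministic $n_0$. Your Step~C is not needed at all.

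By contrast, your Step~A bounds the modulus difference by the full within-cell oscillation $w_\rho(\bar\b_n)$, forcing you to control the random quantity $\max_Q(N_Q^{\mathrm b}+N_Q^{\mathrm d})$ almost surely. This creates two problems. First, the concentration estimate in Step~C for the \v C/VR case is only asserted: the simplex counts are Poisson $U$-statistics of order $q+1$, and while stretched-exponential concentration for such functionals exists in the literature, it is not a one-line claim and you give no argument. Second, Borel--Cantelli produces only a \emph{random} threshold $N(\omega)$ beyond which the bound holds, whereas the statement demands a deterministic $n_0$; the paper's route sidesteps this entirely because $\mu_Q$ is deterministic from the outset.
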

\begin{proof}
We split the proof into two parts, the first one considering the case of a directed filtration, the second one the case of the \v C- and VR-filtration.

	\textit{Directed filtration}. The process $\bt^{r, s}$ is increasing in $r$ and $(-s)$. Hence, by \cite[Corollary 2]{davydov}, it suffices to take {$0\le r_2 - r_1 \le 1/[n^\a]$ and $r, r_1, r_2 \in [-1/2, 1/2]$ as well as $0\le s_2 - s_1 \le T/[n^{\a}]$ and $s, s_1, s_2 \in [0, T]$} and then bound the differences
\[
	n^{-1/2} \ \E{ \bt^{r_2, s} - \bt^{r_1, s} }\quad \text{ and }\quad n^{-1/2} \ \E{ \bt^{r, s_2} - \bt^{r, s_1}}.
\]

We explain how to proceed for the second claim, noting that the steps for the first are similar, but easier. By Palm theory for the Poisson point processes, it suffices to show that 
	$$\lim_{n \to \infty}\sup_{\substack{z \in W_n}}\sqrt n p_n^z([ s_1, s_2])=0, $$
	where $p_n^z([ s_1, s_2])$ denotes the probability that an additional branch born at location $z$ in the network dies in the time interval $[s_1, s_2]$. A necessary condition for a branch to die in that time interval is that $\cP \cap ([s_1, s_2] \times A) \ne \es$. Since, $\cP$ is a Poisson point process with unit intensity, we therefore obtain that 
	$$ \sup_{\substack{z \in W_n}} p_n^z([ s_1, s_2]) \le \E{\cP([s_1, s_2] \times A)} \le c |A| n^{-3/4} = o(n^{-1/2}), $$
as asserted.

\noindent\textit{\v C- and VR-filtration}.
Let {$0\le r_2 - r_1 \le T/[n^{\a}]$ and $0\le s_2 - s_1 \le T/[n^{\a}] $ and $r, s, r_1, s_1, r_2, s_2 \in [0,T]$}. We study the differences
\[
	n^{-1/2} \ \EE\big[ \b^{r_2, s}_{\cK, q, n} - \b^{r_1, s}_{\cK, q, n} \big] \quad\text{ and }\quad n^{-1/2} \ \EE\big[ \b^{r, s_2}_{\cK, q, n} - \b^{r, s_1}_{\cK, q, n }\big]
\]
and only tackle the first expression, as the arguments for the second are very similar. Since we deal with the \v C- and VR-filtration, we obtain from \eqref{Def:PersistentBetti0} that
\begin{align}\begin{split}\label{E:ModCont1}
\b_{\cK, q, n}^{r_2, s} - \b_{\cK, q, n}^{r_1, s}&\le \dim \frac{Z_q(\cK_{r_2}(\cP_n) ) }{Z_q(\cK_{r_1}(\cP_n) )} - \dim \frac{Z_q(\cK_{r_2}(\cP_n) ) \cap B_q( \cK_s(\cP_n)) }{Z_q(\cK_{r_1}(\cP_n) ) \cap B_q( \cK_s(\cP_n)) } \\
		&= \dim \frac{Z_q(\cK_{r_2}(\cP_n) ) + B_q( \cK_s(\cP_n)) }{Z_q(\cK_{r_1}(\cP_n) ) + B_q( \cK_s(\cP_n)) } \\
		&\le \# \{ q-\text{simplices } \s\in \cK_T(\cP_n): r_1 \le r(\s) \le r_2 \},
\end{split}\end{align}
where	we have used for the second equality, the dimension formula $\dim U \cap V = \dim U + \dim V - \dim (U\oplus V)$, for two subspaces $U, V$ of a vector space $W$.  In view of leveraging the density for the filtration times from \cite[Lemma~6.10]{divol2020}, we let $M$ be a sufficiently large ball containing	$[- T , 1 + T ] \times A$. We write $\th_i(x, y) = (x - i, y)$ for the shift of a point $(x, y) \in \R \times A$ by $i \in \R$ along the $x$-axis and we extend this definition in the natural way to describe a shift of sets along the $x$-axis. A corresponding shift $M_i := \theta_{-i}(M)$ of $M$ then covers the shifted window $[-T + i, 1 + T + i]$. We write $\cP_i^+$ for the unit-intensity Poisson point process on $M_i$.

	With this extension, we now bound the expected difference  $\EE\big[\b_{\cK, q, n}^{r_2, s} - \b_{\cK, q, n}^{r_1, s}\big]$ by applying the Slivnyak formula with
	$$f(z_0, \dots, z_q) = 1\big\{r(z_0, \dots, z_q) \in (r_1, r_2]\big\} 1\big\{z_0, \dots, z_q \in M_i\big\}.$$
	Thus, letting $Z_0', \dots, Z_q'$ be iid {with a uniform distribution } on $M$ gives that
\begin{align*}
	\EE[ \b^{r_2,s}_{\cK,q,n}- \b^{r_1,s}_{\cK,q,n} ]	&\le  \sum_{|i|\le n/2} \EE \big[\# \{ q-\text{simplices } \s\in \cK_T(\cP_n): r_1 \le r(\s) \le r_2,\, \s \cap ([i ,i+1]\times A) \ne \es \} \big] \\
	&\le  \sum_{|i|\le n/2} \EE \big[\# \{ q-\text{simplices } \s\in \cK_T(\cP_i^+): r_1 \le r(\s) \le r_2\} \big] \\
			&\le \sum_{|i|\le n/2}|M_i|^{q + 1}\P\big(r_1 \le r( \{Z_0',\dots,Z_q'\} ) \le r_2\big).
			 \end{align*}
			 Thus, using the aforementioned result from \cite[Lemma~6.10]{divol2020} that filtration times admit a bounded density, we see that $\EE[ \b^{r_2,s}_{\cK,q,n}- \b^{r_1,s}_{\cK,q,n} ]$ is bounded above by $c' n (r_2 - r_1)$ for a suitable $c' > 0$. This completes the proof.
 \end{proof}

\noindent\textit{Step 2 --The moment condition}
The second step consists in deriving the Chentsov-type moment condition in \eqref{lav_eq} provided that the blocks are taken from the grid.

Proceeding in the vein of \cite{penrose2001central}, we first discretize the space into intervals and then leverage a martingale-difference decomposition. Then, the decomposition takes the basic form,
\begin{align}
	\label{bnDec}
\bar\b_n(E) = \sum_{|i|\le \kn} D_{i, n}(E).
\end{align}
To define the increments $D_{i, n}(E)$ precisely, we introduce for $i \in Z$
$$
	\cG_{i} := \s\big(\cP \cap \big( (-\infty, i + 1/2] \times A \big)\big)
$$
as the $\s$-algebra of the information coming from the configuration of $\cP$ in $(-\infty, i + 1/2] \times A$.  If $n$ is odd, then $\bn(E) = \EE[\bn(E) | \cG_{ (n-1)/2} ]$ and $\EE[\bn(E)] = \EE[ \bn(E)| \cG_{- (n-1)/2 - 1} ]$, where the last equality follows because $\cG_{- (n-1)/2 - 1}$ and $\sigma(\bn(E))$ are independent.
If $n$ is even, then $\bn(E) = \EE[\bn(E) | \cG_{n/2}]$ and $\EE[\bn(E)] = \EE[ \bn(E)| \cG_{-\kn - 1} ]$.
We set
\begin{align}
	\label{dnDef}
	D_{i, n}(E) := \EE\big[\bn(E)|\cG_{i}\big] - \EE\big[\bn(E)|\cG_{i - 1}\big]
\end{align}
and \eqref{bnDec} is satisfied. 
A delicate part in bounding the right-hand side of \eqref{bnDec} are upper bounds on the $|D_{i,n}(E)|$, which we derive in Lemma \ref{diffBoundLem} below. To prove this result, we rely on a key advantage of working with the Poisson point process: we can re-express $D_{i, n}$ in terms of a single conditional expectation with respect to $\cG_{i}$. 

To make this precise, let $\cP'$ be an independent copy of the Poisson process $\cP$ and write $\cP'_n$ for the restriction of $\cP'$ to $W_n$. 
Now, we obtain for $i \in \R$ 
\begin{align}\label{Poisson_copy}
	\cP_{i, n} := [\cP_n\setminus \th_{-i}(W_1) ] \cup [ \cP_n' \cap \th_{-i}(W_1)].
	\end{align}
and denote by $\bnz(E) = \bn(\cP_{i, n})(E)$ the increment computed on the basis of $\cP_{i, n}$ instead of $\cP_n$. Then, $\EE\big[ \bn(E) | \cG_{i-1} \big] = \EE\big[ \bn(\cP_{i, n} )(E) | \cG_{i} \big]$ yields that
$
   D_{i, n}(E) =  \ \EE\big[\bn(E) - \bnz(E)\ | \ \cG_{i} \big].
   $

   %DE DEF
We rely essentially on two ingredients, namely on stabilization and on moment properties of the Poisson point process. As a preliminary observation, we note that by Jensen's inequality, it suffices to derive bounds for
$$\dnz(E) := \bn(E) - \bnz(E).$$

%EXT STAB
To that end, we need a form of external stabilization. First, we discuss the directed filtration.
For $j \in \Z$ let $R_j$ denote the radius of stabilization for the shifted process $\th_j(\cP)$. Furthermore, put 
$$A_i^+ := \sup\{j \ge i:\, \th_{-i}(W_1) \cap \th_{-j}(W_{R_j}) \ne \es\}$$
as the last index to the right of $i$ that can still be influenced by changes at $\th_{-i}(W_1)$. Note that $A_i^+$ is almost surely finite, since the radii of stabilization have exponential tails. Similarly, we define 
$$A_i^- := \inf\{j\le i:\, \th_{-i}(W_1) \cap \th_{-j}(W_{R_j}) \ne \es\}$$
and put
\begin{align}
  \label{ext_stab_def}
  R_i' := 2T + 2\max\{i - A_i^-, A_i^+ - i\}.
\end{align}
Hence, switching from $\cP_n$ to $\cP_{i, n}$ does not influence life times of branches starting outside $\th_{-i}(W_{R_i'})$.

%CECH EXT
For the \v C- and VR-filtration, we can proceed along the same lines. More precisely, writing $\ms{Comp}(\cP)$ for the family of connected components of the \v C-resp. VR complex at parameter $T$, we set
\begin{align}
	\label{cech_stab}
	R_j := R_j' := 2T + \max_{\cC \in \ms{Comp}(\cP):\, \cC \cap \th_{-j}(W_1) \ne \es}2\ms{diam}(\cC)
\end{align}
as $2T$ plus the maximum of the diameter of the connected components intersecting $\th_{-j}(W_1)$.
Here, the exponential decay relies critically on the assumption that we consider the Poisson point process in the cylindrical tube $\R \times A$, where no percolation occurs.

%
%VOL BOUND THM
%

In the proof of Proposition \ref{grid_prop}, we have already used \cite[Lemma~6.10]{divol2020} in order to derive a continuity property for the filtration time associated with a single simplex. In order to derive the moment bounds, we are facing a more complex task since the corresponding estimates involve now pairs of simplices corresponding to the birth- and death times.  We will therefore need a related property which concerns not just a single simplex but a pair of simplices. At the same time, we do not need that the corresponding density is bounded, but for us a H\"older-type property is sufficient.

\bepr[H\"older continuity of filtration times]
\label{Cech_prop}
	Let $-1\le q'  < q  + 1\le d$ and $\rho > 0$. 	Let $Z_0,\dots, Z_{q + 1}$ and $Y_{q'+1}, \dots, Y_q$ be iid uniform on $B(0, \rho)$ and write $\s = \{Z_0,\dots,Z_{q'}, Y_{q' + 1}, \dots,  Y_q\}$ and $\wt \s = \{ Z_0,\dots, Z_{q + 1} \}$.
	 	Then, there is a constant $C = C_{d,T} < \infty$ such that in the \v C-or VR-filtration for every block $E \subset J$,
\begin{align}\label{E:ContinuityC0}
	\p\big( (r(\s), r(\wt \s)) \in E,\,  r(\wt \s) > r(\s) \big) 	\le C_{d,T} |E|^{3/4}.
\end{align}
\enpr

In the VR-filtration, Proposition \ref{Cech_prop} could be sharpened by replacing $|E|^{3/4}$ with $|E|$. However, in the \v C-filtration, the geometry is substantially more involved.

\begin{proof}
	We only prove Proposition \ref{Cech_prop} for the \v C filtration as the arguments for the VR filtration are much simpler and an immediate consequence. In the following, we will bound above the filtration times by $T$ if this is useful and do not mention it any further. For $i \le q$ we set $Z_{\le i} := (Z_0, \dots, Z_i)$.

	We make use of the following two structural results laid out in detail in \cite[Lemma~6.10]{divol2020}. {We quote these results:} (i) The filtration time $r_q$ admits a continuous density on $\R_+$, which is bounded above by some $c_{1, d} > 0$.
	(ii) Almost surely,
	$$
		\p( r(Z_{\le i}) \in (s,t] \ba Z_{\le {i - 1}} ) \le c_{2,d} \int_{s \vee  r(Z_{\le i - 1}) }^t \frac{v^d  }{ \sqrt{ v^2 - r(Z_{\le i - 1} )^2 } } \intd{v},
		$$
	where $c_{2,d}\in\R_+$ depends only  on $d$, see \cite[Proof of Lemma~6.10]{divol2020}.
		Therefore, also for every measurable $g\colon [0, \infty) \to [0, \infty)$, almost surely,
		\begin{align}
			\label{cond_filt_eq}
					\EE[ g(r(Z_{\le i}))  \ba Z_{\le {i - 1}} ] \le c_{2,d} \int_{r(Z_{\le i - 1})}^\infty g(v) \frac{v^d  }{ \sqrt{ v^2 - r(Z_{\le i - 1} )^2 } } \intd{v}.
		\end{align}

	\textit{Part 1. $q' < q$.} In this case, we are in the situation, where the simplex $\wt\sigma$ contains at least two points which are not contained in the simplex $\sigma$.  Hence, applying the bound \eqref{cond_filt_eq} twice,
\begin{align*}
	\p( r(\wt \s) \in (s,t] \ba Z_{\le q - 1} )  &= \E{ \E{ \1{r(\wt \s ) \in (s,t] } \ba Z_{\le q } } \ba Z_{\le q - 1 }}  
	\le \EE\Big[ c_{2,d} \int_{s \vee r(Z_{\le q})}^t \frac{v^d  }{\sqrt{v^2 - r(Z_{\le q})^2} } \ \intd{v} \ \Big| \  r(Z_{\le q-1}) \Big]  \\
	&\le c_{2,d}^2 \int_{r(Z_{\le q - 1})}^t \int_{s \vee u}^t  \frac{u^dv^d  }{{\sqrt{v^2 - u^2} }\sqrt{u^2 - r(Z_{\le q -1})^2} } \ \intd{v} \ \intd u  \le c_{2,d}^2 \int_s^t v^d (2T^{d-1}) \ \intd{v},
\end{align*}
where the last inequality is derived as follows: Let $m=\sqrt{(v^2+r(Z_{\le q - 1})^2)/2}$, then
\begin{align*}
	\int_{r(Z_{\le q - 1})}^v \frac{ u }{ \sqrt{ v^2 - u^2 } \sqrt{ u^2 - r(Z_{\le q - 1})^2 } } \ \intd u  
	&\le \int_{r(Z_{\le q - 1})}^m \frac{ u }{ \sqrt{ v^2 - m^2 } \sqrt{ u^2 - r(Z_{\le q - 1})^2 } } \ \intd u  + \int_m^v \frac{ u }{ \sqrt{ v^2 - u^2 } \sqrt{ m^2 - r(Z_{\le q - 1})^2 } } \ \intd u \\
	&= \frac{\sqrt{ m^2 - r(Z_{\le q - 1})^2 }}{\sqrt{ v^2 - m^2 }} + \frac{\sqrt{ v^2 - m^2 }}{\sqrt{ m^2 - r(Z_{\le q - 1})^2 }} = 2.
\end{align*}
Consequently, almost surely, $\p( r(\wt \s) \in (s,t] \ba Z_{ \le q - 1} ) \le C (t-s)$. Therefore, writing $E = \Eb \times \Ed$,
\begin{align*}
	& \p( r(\s)\in \Eb, 	r(\wt \s)\in \Ed ) 
	= \EE\big[1\{r(\s) \in \Eb\}
	\p\big( r(\wt \s )\in \Ed  \ba Z_{\le q - 1}\big)\big	] \le C \ \p( r(\s ) \in \Eb) \ |\Ed| \le C'  |E|,
\end{align*}
where the last inequality follows because the unconditional density function is bounded. This completes the proof of part 1.

	\textit{Part 2. $q' = q$.}
First, we split the block $E$ into sub-blocks $E_1,\ldots,E_4$ such that the diagonal is contained in at most one block. Note that depending on the position of $E$ some blocks can be empty. Once, we have verified \eqref{E:ContinuityC0} for each sub-block, the claim follows by the additivity of the probability measure and because $\sum_{i \le 4} |E_i|^{3/4} \le 4 |E|^{3/4}$.

	It is clear, that sub-blocks below the diagonal will not contribute to the probability in \eqref{E:ContinuityC0}. To this end, we represent the block as $E = \Eb \times \Ed$ and only have to consider two cases: (a)$\Eb = \Ed$  and (b) $\Eb \cap \Ed = \es$.

	To deal with case (a),  we write $\Eb = \Ed = (a, b]$ and conclude  from \eqref{cond_filt_eq} that almost surely   on the event $r(\s) \in \Eb$,
\begin{align*}
	\p\big( r(\wt \s) \in \Ed  \ba Z_{\le q} \big) &\le 
					c_{2,d} \int_{\Ed } \frac{v^d1\{v >  r(\s)\}  }{ \sqrt{ v^2 - r(\s )^2 } } \intd{v}
					 \le c_{2,d}T^{d -1} \int_{\Ed} \frac{v 1\{v >  r(\s)\} }{ \sqrt{ v^2 - r(\s )^2 } } \intd{v}
					=c_{2,d}T^{d - 1} \sqrt{b^2 -  r(\s)^2}\le C T^{d - 1/2}\sqrt{|\Eb|}.
		\end{align*}
Hence, 
\begin{align*}
	& \p( r(\s)\in \Eb, 	r(\wt \s)\in \Eb ) 
	= \EE\big[1\{r(\s) \in \Eb\}
	\p\big( r(\wt \s )\in \Eb  \ba Z_{\le q }\big)\big	] \le CT^{d - 1/2}\p( r(\s ) \in \Eb)\sqrt{|\Eb|} \le C'_T  |E|^{3/4},
\end{align*}
where we rely again on the boundedness of the unconditional density.

It remains to deal with case (b). Here, we write $\Ed = (c, d]$ and again apply \eqref{cond_filt_eq} to obtain that conditioned on $r(\s) \in \Eb$,
\begin{align*}
	\p\big( r(\wt \s) \in \Ed  \ba Z_{\le q} \big) &\le c_{2,d}
					 \int_{\Ed } \frac{v^d  }{ \sqrt{ v^2 - r(\s )^2 } } \intd{v}
					 \le c_{2,d}T^{d -1} \int_{\Ed} \frac{v  }{ \sqrt{ v^2 - r(\s )^2 } } \intd{v}\\
					 &=c_{2,d}T^{d - 1} \Big(\sqrt{d^2 - r(\s)^2} - \sqrt{c^2 - r(\s)^2}\Big).
		\end{align*}
Now, by completing the square,
$$\sqrt{d^2 - r( \s)^2} - \sqrt{c^2 - r( \s)^2} =  \frac{d^2 - c^2}{\sqrt{d^2 - r( \s)^2} + \sqrt{c^2 - r( \s)^2}} \le \frac{d^2 - c^2}{\sqrt{d^2 - r( \s)^2}} \le 2\sqrt T\frac{d - c}{\sqrt{d - r( \s)}}.$$
If $|\Eb| \le |\Ed|$, then using that $r(\s)$ has a bounded density and that $\sqrt{d - u} \ge \sqrt{d - c}$ for $u \in \Eb$, we arrive at
$$\p\big( r(\s) \in \Eb, r(\wt \s) \in \Ed\big)\le 2c_{1,d}T^{d - 1/2} (d - c) \int_{\Eb} \frac1{\sqrt{d - u}}\d u \le 4c_{1,d}T^{d - 1/2} \sqrt{d - c}|\Eb| \le C'_T|E|^{3/4}.$$
On the other hand, if $|\Eb| \ge |\Ed|$, then we may again complete the square to obtain that
$$\p\big( r(\s) \in \Eb, r(\wt \s) \in \Ed\big)\le 2c_{1,d}T^{d - 1}(d - c) \int_{\Eb} \frac1{\sqrt{d - u}}\d u =4c_{1,d}T^{d - 1} (d - c)\big(\sqrt{d - a} - \sqrt{d - b}\big) \le C_T (d - c)\frac{b - a}{\sqrt{d - a}}.$$
Finally, we conclude the proof by noting that
$(d - c)({b - a})/{\sqrt{d - a}} \le (d - c) \sqrt{b - a} \le C'_T |E|^{3/4}.$
\end{proof}

%MOM BOUND
We stress that the key ingredient for the moment bounds is the exponential decay of the radius of stabilization rather than the cylindrical setup. In particular, the arguments would work in the entire $\R^d$ if we restrict to the sub-critical regime of continuum percolation, or if we work with bounded features as in \cite[Lemma 9.6]{svane}.
\begin{lemma}[Moment bound]
	\label{diffBoundLem}
	For every $k \ge 1$ there exists a constant $C_k > 0$ such that for every $n \ge 1$ and every block $E = \Eb \times \Ed \subset J$, 
\begin{enumerate}
  \item { in the directed filtration, 
  $\max_{|i| \le \kn} \ \E{|\D_{i, n}(E)|^k} \le C_kn^{-16} + C_k\one\{i/n \in 2\Eb\}|\Ed|^{7/8}.$}
\item in the \v C- and VR-filtration,
	$\max_{|i| \le \kn} \ \E{|\D_{i, n}(E)|^k} \le C_k |E|^{11/16}.$
	\end{enumerate}
\end{lemma}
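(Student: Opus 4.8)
We establish the bound for $\dnz(E) = \bn(E) - \bnz(E)$; the stated bound for $D_{i,n}(E) = \E{\dnz(E)\mid \cG_i}$ then follows by Jensen's inequality, as remarked before the lemma. Both parts rest on the same template. First, \emph{exterior stabilization} localizes the effect of the resampling in $\theta_{-i}(W_1)$: every feature that is counted toward $\bn(E)$ but not toward $\bnz(E)$, or vice versa, is ``affected'' by that change, and by \eqref{ext_stab_def} respectively \eqref{cech_stab} all affected features are supported inside the random window $\theta_{-i}(W_{R_i'})$, where $R_i'$ has exponential tails (for the \v C-/VR-filtration this is precisely where the cylindrical, sub-percolative geometry is used). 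Hence there is a crude deterministic bound $|\dnz(E)| \le N_i + N_i'$, where $N_i$ (respectively $N_i'$) counts the features of $\cP_n$ (respectively $\cP_{i,n}$) supported in $\theta_{-i}(W_{R_i'})$ that carry the birth/death attributes prescribed by $E$; each of $N_i, N_i'$ is in turn dominated by a count $M_i$ of simplex tuples on $\cP \cap \theta_{-i}(W_{R_i'})$ whose every moment is finite because $R_i'$ is light-tailed. The final step is Hölder's inequality in the form
$$
  \E{N_i^k} \le \E{M_i^k\, \one\{N_i \ge 1\}} \le \big(\E{M_i^{pk}}\big)^{1/p}\, \P(N_i \ge 1)^{1 - 1/p} \le C_{k, p}\, \E{N_i}^{1 - 1/p},
$$
and the exponents $7/8$ and $11/16$ will emerge from taking $p = 8$, respectively $p = 12$, once good first-moment bounds on $N_i$ are available.

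\emph{Directed filtration.} Here $N_i$ is the number of branches born at a point of $\cP \cap \theta_{-i}(W_{R_i'})$ whose rescaled birth coordinate lies in $\Eb$ and whose life time lies in $\Ed$: this is the only way a feature can change its membership in $E$, since by \eqref{ext_stab_def} switching to $\cP_{i, n}$ leaves the life times of branches born outside $\theta_{-i}(W_{R_i'})$ untouched. By the Slivnyak--Mecke formula (Theorem~\ref{smf}), using that a branch born at $z = (z_1, z_2)$ can only have life time in $\Ed$ if $\cP$ charges the slab $[z_1 + \min \Ed, z_1 + \max \Ed] \times A$ -- an event of probability at most $|A|\,|\Ed|$, independent of the configuration to the left of $z$ that governs whether $z$ gives birth -- and integrating out the window size via $\int_\R \P(R_i' \ge 2|u|)\, \d u = \E{R_i'} < \infty$, one obtains $\E{N_i} \le c |A|\, \E{R_i'}\, |\Ed|$. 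If moreover $i/n \notin 2\Eb$, then any branch contributing to $N_i$ has spatial birth coordinate at distance at least $n|\Eb|/2 \ge n^{1 - \a}/2$ from $i$ (using $|\Eb| \ge [n^{\a}]^{-1}$ for blocks of the grid $T_n$), which forces $R_i' \ge n^{1 - \a}$, so $\P(N_i \ge 1) \le \P(R_i' \ge n^{1 - \a}) \le e^{-c n^{1 - \a}}$. Feeding these into the Hölder estimate with $p = 8$ and recalling $\a = 3/4$ gives $\E{N_i^k} \le C_k |\Ed|^{7/8}$ in general and $\E{N_i^k} \le C_k e^{-c' n^{1/4}} \le C_k n^{-16}$ whenever $i/n \notin 2\Eb$ (the trivial bound $\E{N_i^k} \le C_k$ absorbing the finitely many small $n$); since $N_i'$ obeys the same estimate, the claim follows.

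\emph{\v C- and VR-filtration.} A $q$-feature with $(B_j, D_j) \in E$ is a persistence pair $(\sigma_j, \tau_j)$, with $\sigma_j$ a $q$-simplex and $\tau_j$ a $(q+1)$-simplex satisfying $r(\sigma_j) = B_j$, $r(\tau_j) = D_j$, lying in a common connected component of $\cK_T(\cP_n)$; the map $(B_j, D_j) \mapsto (\sigma_j, \tau_j)$ is injective, and by stability of persistent Betti numbers under the finitely many simplex insertions and deletions caused by the resampling (together with the fact that any rematching of pairs stays inside the modified region), an affected pair has $\sigma_j, \tau_j$ -- hence, via \eqref{cech_stab}, all of $\sigma_j \cup \tau_j$ -- contained in $\theta_{-i}(W_{R_i'})$. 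Thus $|\dnz(E)| \le 2 \tilde N_i$ with
$$
  \tilde N_i := \#\big\{ (\sigma, \tau) :\ \sigma\ \text{a $q$-simplex},\ \tau\ \text{a $(q+1)$-simplex},\ \sigma \cup \tau \subset \theta_{-i}(W_{R_i'}),\ r(\sigma) \in \Eb,\ r(\tau) \in \Ed,\ r(\tau) > r(\sigma) \big\}.
$$
Decomposing the pairs $(\sigma, \tau)$ according to the number $q' + 1 \in \{0, \dots, q+1\}$ of shared vertices and applying Slivnyak--Mecke conditionally on $R_i'$, each resulting term is at most a polynomial in $R_i'$ times $\P\big( (r(\sigma), r(\tilde\sigma)) \in E,\ r(\tilde\sigma) > r(\sigma) \big)$ for simplices built from independent uniform points with $q' + 1$ common vertices; Proposition~\ref{Cech_prop} bounds this probability by $C_{d, T} |E|^{3/4}$, so integrating out the exponential tails of $R_i'$ yields $\E{\tilde N_i} \le C |E|^{3/4}$. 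Since $\tilde N_i$ is dominated by the number $P_i$ of \emph{all} $q$-/$(q+1)$-simplex pairs inside $\theta_{-i}(W_{R_i'})$, which has finite moments of every order by the exponential tails of $R_i'$, the Hölder estimate with $p = 12$ and $\P(\tilde N_i \ge 1) \le \E{\tilde N_i} \le C|E|^{3/4}$ gives $\E{\tilde N_i^k} \le C_k \big(C |E|^{3/4}\big)^{11/12} = C_k |E|^{11/16}$.

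The main obstacle is the reduction step in the \v C-/VR-case: converting the change $\dnz(E)$ of the persistent Betti number into a bound by a count of \emph{geometrically local} simplex pairs needs both the algebraic bookkeeping of persistence pairings (stability under inserting or deleting the finitely many simplices touched by the resampling, and confinement of any rematching to the modified region) and the geometric input that the modified region -- a union of connected components meeting $\theta_{-i}(W_1)$ -- cannot escape $\theta_{-i}(W_{R_i'})$; this is exactly the point where the cylindrical, sub-percolative setup is indispensable, as it is what makes $R_i'$ exponentially light. Once this localization is secured, the remaining estimates are the routine combination of Proposition~\ref{Cech_prop}, Slivnyak--Mecke and Hölder's inequality indicated above; in the directed case the analogous localization is immediate from \eqref{ext_stab_def}, and the only delicate point is extracting the super-polynomial factor $n^{-16}$ from the grid spacing $[n^{\a}]^{-1}$ via the exponential tails of $R_i'$.
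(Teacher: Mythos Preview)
Your overall strategy coincides with the paper's: localize the change via the external stabilization radius $R_i'$, dominate the increment by a count of simplex pairs in the (random) window $\theta_{-i}(W_{R_i'})$, bound the $k$th moment by H\"older against an indicator, and then feed in Slivnyak--Mecke together with Proposition~\ref{Cech_prop} (respectively the slab bound $p_n^z(\Ed)\le |A|\,|\Ed|$). The difference lies in \emph{where} you apply H\"older and how you decouple the random window $R_i'$ from the filtration-time constraints. The paper first applies H\"older to reduce to bounding the single \emph{probability} $\P(\De'_{i,R_i'}(E,\cP_n)\ge 1)$, and only then unfolds the random window via
\[
\P\big(\De'_{i,R_i'}\ge 1\big)=\sum_{m\ge 1}\P\big(\{\De'_{i,R_i'}\ge 1\}\cap\{m-1\le R_i'<m\}\big)
\le \sum_{m\ge 1}\P(R_i'\ge m-1)^{a}\,\P(\De'_{i,m}\ge 1)^{1-a},
\]
so that Slivnyak--Mecke is applied only to the \emph{deterministic}-window counts $\De'_{i,m}$.

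By contrast, you ask for the first-moment bounds $\E{N_i}\le c|\Ed|$ and $\E{\tilde N_i}\le C|E|^{3/4}$ with the random window $W_{R_i'}$ still present, and justify them by ``Slivnyak--Mecke conditionally on $R_i'$''. This is the step that does not go through as written: $R_i'$ is itself a functional of $\cP$ (through the connected-component structure in the \v C/VR case, through the stabilization radii in the directed case), so conditioning on $R_i'$ destroys the Poisson law and the Mecke formula no longer applies. Concretely, in your directed computation the events $\{z\in\theta_{-i}(W_{R_i'})\}$ and $\{\cP\cap([\pi_1(z)+\min\Ed,\pi_1(z)+\max\Ed]\times A)\ne\es\}$ are both increasing in $\cP$, hence by FKG they are \emph{positively} correlated, so one cannot simply multiply the two bounds and then ``integrate out the window size''. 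The fix is precisely the paper's second layer of H\"older in the display above; alternatively one can run a careful Palm argument showing that $\P^{\mathbf z}\big(\mathbf z\subset\theta_{-i}(W_{R_i'(\cP\cup\mathbf z)})\big)$ decays exponentially in $\mathrm{dist}(\mathbf z,\theta_{-i}(W_1))$, but this requires the additional observation that adjoining finitely many points does not spoil the exponential tail of the component diameter in the tube. Once this decoupling is in place your argument is complete; note, however, that the double H\"older yields exponent $(3/4)(1-1/p)^2$ rather than $(3/4)(1-1/p)$, so the choice $p=12$ no longer lands exactly on $11/16$ and $p$ must be taken somewhat larger.
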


%
%DIFF BOUND
%
\begin{proof}
	First, we present arguments that are largely common to all of the three filtrations, namely an upper bound of the form
	\begin{align}
		\label{dnz_eq}
		|\dnz(E)| \le \De_{i,R_i'}'(E, \cP_n) + \De_{i,R_i'}'(E, \cP_{i, n}).
	\end{align}
	Here, for the directed filtration, $\De_{i, R_i'}'(E, \cP_n)$ denotes the number of branches in the network on $\cP_n$ with life time in $\Ed$ born in the domain $W_{i, R_i'} := \theta_{-i}(W_{R_i'})$. 

	In the \v C- and VR-setting, $\dnz(E)$ is determined by the features 
	contained in the filtration $\cK_T( \cP_n)$ which are not in $\cK_T( \cP_{i,n} )$ and vice versa. Hence, these features all have at least one simplex which intersects with $[i - 1/2, i + 1/2]\times A$. Therefore, both the $q$-simplex $\s \in \cK_T(\cP_n)$ giving birth to that feature, as well as the $(q + 1)$-simplex $\wt \s \in \cK_T(\cP_n)$ leading to the death of the feature are contained in the corresponding connected component, which is a subset of $W_{i, R_i'}$.	Therefore, 
	we define $\De_{i, R_i'}'(E, \cP_n)$ to be the number of pairs of $(q, q + 1)$-simplices $(\s,\wt \s)$ in $\cK_T(\cP_n)$ that are both contained in $W_{i, R_i'}$ and satisfy $(r(\s), r(\wt \s))\in E$  with $r(\wt \s) > r(\s)$. 

	We now continue from \eqref{dnz_eq} as follows.
	Since $\cP_n$ and $\cP_{i, n}$ share the same distribution, it suffices to derive bounds for $\De_{i,R_i'}'(E, \cP_n)$. Now, 
	$$\De_{i, R_i'}'(E, \cP_n) \le \cP(W_{i, R_i'}')^{2d + 2} \one\{\De_{i, R_i'}'(E, \cP_n) \ge 1\},$$
so that by H\"older's inequality,
	$$\EE[|\De_{i, R_i'}'(E, \cP_n)|^k] \le \big(\EE[(\cP (W_{i, R_i'}))^{16(2d + 2)k}]\big)^{1/16} \P(\De_{i,R_i'}'(E, \cP_n) \ge 1)^{15/16}.$$
By stationarity and the exponential decay of the radii of stabilization, the first quantity on the right-hand side is bounded by a finite constant not depending on $i$. Now, distinguishing on the value of $R_i'$,
\begin{align*}
\P(\De_{i, R_i'}'(E, \cP_n) \ge 1) &= \sum_{k\ge 1}\P\big(\{\De_{i, R_i'}'(E, \cP_n) \ge 1\}\cap\{k - 1 \le R_i' < k \}\big) 
\le \sum_{k \ge 1}\P(k - 1 \le R_i')^{1/16} \P(\De_{i, k}'(E, \cP_n) \ge 1)^{15/16},
\end{align*}
	so that by the exponential decay of the tails of $R_i'$, it remains to derive bounds on $\P(\De_{i, k}'(E, \cP_n) \ge 1)$. 
{The rest of the proof is separated in two parts, one treating the case for the directed filtrations, one for the \v C- and the VR-filtration.}

\noindent\textit{Directed filtrations}. 
In the case of the directed filtration, 
	$
\P(\De_{i, k }'(E, \cP_n) \ge 1) \le\int_{W_{i, k}}p_n^z(E_d)\d z,
$
	where $p_n^z(\Ed)$ denotes the probability that a branch born at $z = x + y$ has a lifetime in $\Ed$.  Moreover, it is bounded above by the probability that there exists at least one Poisson point in the interval $(x + \Ed)\times A$. Hence, $p_n^z(\Ed) \le |\Ed||A|$ and plugging this back into the integral concludes the first part of the proof.

It remains to consider the case where $i \not \in 2n\Eb$. Since $|\Eb| \ge n^{-3/4}$, this means that the distance between $i$ and the interval $\Eb$ is at least $n^{1/4}$. In particular, $
\De_{i,n}'(E) \ne 0$ only if $R_i' + T+ 1 \ge n^{1/4}$, where the external radius of stabilization $R_i'$ is as in \eqref{ext_stab_def}. Hence, by Cauchy-Schwarz,
\begin{align*}
		\EE[|D_{i, n}|^k] \le \E{\#\cP_n^k \one\{R_i' + T + 1\ge n^{1/4}\}} 
		\le \sqrt{\E{\#\cP_n^{2k}}}\sqrt{\P\big(R_0' + T + 1 > n^{1/4}\big)},
		\end{align*}
which is of order at most $O(n^{-16})$ due to the exponentially decaying tail of the radius of stabilization.

\noindent	\textit{\v C- and VR-filtration}.
	We now explain how to bound $\P(\De_{i, k }'(E, \cP_n) \ge 1)$ in the \v C- and VR-filtration.   To that end, we decompose this quantity further as 
	$$\De_{i, k}' (E, \cP_n) = \De_{i, k, 1}' (E, \cP_n) + \De_{i, k, 2}' (E, \cP_n),$$
	where the first summand takes into account pairs of simplices $(\s, \wt \s)$ sharing at least one common vertex, whereas in the second one, the simplices $\s$ and $\wt\s$ are disjoint. The arguments for bounding the contributions in $\De_{i, k, 2}'(E, \cP_n)$ are very similar to the ones presented in the proof of Proposition \ref{grid_prop}. Therefore, we now concentrate on the novel arguments required to deal with $\De_{i, k, 1}'(E, \cP_n)$. 

	First, note that there exists $0 \le q' \le q$ such that for the pair of simplices $(\s, \wt \s)$ from the definition of $\De_{i, k}'(E, \cP_n)$ we have $\s = \s_{q'} = \{Z_0, \dots,Z_{q'}, Y_{q' + 1}, \dots, Y_q\}$ and $\wt \s   = \{Z_0, \dots, Z_{q + 1}\}$ with $Z_0, \dots, Z_{q + 1},  Y_{q' + 1}, \dots, Y_q \subset \cP_n \cap W_{i, k}$ pairwise distinct. Next, we introduce the event
	$$A(z_0, \dots, z_{q + 1},  y_{q' + 1}, \dots,  y_{q }) :=\{(r(\s_{q'}), r(\wt \s)) \in E\}\cap \{ r(\wt \s) > r(\s_{q'})\}$$
and apply the Slivnyak-Mecke formula with
	$$f(z_0, \dots, z_{q + 1}, y_{q' + 1}, \dots, y_{q}) = 1\{A(z_0, \dots, z_{q + 1},  y_{q' + 1}, \dots, y_{q + 1})\} 1\big\{z_0, \dots, z_{q + 1}, y_{q' + 1}, \dots, y_{q } \in M_{i + j}\big\}.$$
Thus, arguing as in the proof of Proposition \ref{grid_prop}, 
\begin{align*}
	\EE[ \De_{i, k, 1}'(E, \cP_n) ]	&\le  \sum_{|j|\le k/2} \sum_{0 \le q' \le q}\EE \big[\# \{ (\s_{q'}, \wt \s):\, (r(\s_{q'}), r(\wt \s))\in E,\, r(\wt \s) > r(\s_{q'}),\,  \s_{q'} \cap ([i + j, i + j + 1]\times A) \ne \es \} \big] \\
			&\le \sum_{|j|\le k/2} \sum_{0 \le q' \le q}|M_{i + j}|^{2q - q' + 2}\P\big(A(Z_0',\dots,Z_{q + 1}', Y_{q' + 1}', \dots, Y_{q }' )\big),
			 \end{align*}
			 where the
	$Z_0',\dots,Z_{q + 1}', Y_{q' + 1}', \dots, Y_{q }'$ are now iid uniform in $M_{i + j}$. Thus, applying Proposition \ref{Cech_prop} shows that $ \EE[\De_{i, k, 1}'(E, \cP_n)]$ is bounded above by $ck|E|^{3/4}$ for a suitable $c > 0$. 
 \end{proof}

 %
 %COVARIANCE
 %
 The second ingredient in bounding the right-hand side of \eqref{bnDec} concerns derivations of covariance bounds, which is the topic of the following auxiliary result. It is motivated by the intuition that if the radii of stabilization exhibit exponential tails, then it is plausible that this decay is inherited by the martingale-difference terms corresponding to spatially distant locations.

 As in Lemma \ref{diffBoundLem}, the key to the covariance bounds lies in the exponential stabilization rather than in the cylindrical setup. For a similar statement under weaker conditions, we refer the reader to \cite[Theorem 1.11]{byy}.

\begin{lemma}[Covariance bound]
	\label{covBoundLem}
	For every $p, q \ge 1$ there exist $C_{p, q} C_{p, q}' > 0$ with the following property. Let $n \ge 1$ and $1 \le i_1 \le \cdots \le i_p < i_{p + 1} \le \cdots \le i_{p + q} $ and set 	 $X_1 = \prod_{k \le p}\D_{i_k, n}(E)$ and $X_2 = \prod_{k \le q}\D_{i_{p + k}, n}(E)$. Then, 
	$$\Cov\big(X_1, X_2\big) \le C_{p, q} \exp\big(-(i_{p + 1} - i_p)^{C_{p, q}'}\big)\sqrt{\E{X_1^4}\E{X_2^4}}.$$
\end{lemma}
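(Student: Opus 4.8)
The plan is to establish asymptotic independence of $X_1$ and $X_2$ in the gap $m := i_{p+1}-i_p$, by coupling each of them to a \emph{stabilized surrogate} that reads off the Poisson process only in a bounded strip: the two surrogates then live on disjoint strips, hence are independent, and the coupling error is governed by the exponential tails \eqref{stabEq2}, \eqref{cech_stab} of the radii of stabilization. For $m$ below a fixed constant one just uses $|\Cov(X_1,X_2)| \le \sqrt{\V{X_1}\V{X_2}} \le \sqrt{\E{X_1^4}\E{X_2^4}}$ (Cauchy--Schwarz and Jensen) and enlarges $C_{p,q}$; so from now on $m$ is large and I put $L := \lfloor m/3\rfloor - 2T - 2$.

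\emph{Step 1 (surrogates).} Let $U_1 := (-\infty, i_p + \lfloor m/3\rfloor]\times A$ and $U_2 := [i_{p+1}-\lfloor m/3\rfloor,\infty)\times A$, disjoint for $m$ large. For $k\le p$, define $\wt D_{i_k}(E)$ by the recipe of \eqref{dnDef}, \eqref{Poisson_copy} for $D_{i_k,n}(E)$ but with $\cP$ and its independent copy $\cP'$ replaced by their restrictions to $U_1$, set $\wt X_1 := \prod_{k\le p}\wt D_{i_k}(E)$, and define $\wt X_2$ symmetrically using $U_2$. Then $\wt X_1$ is measurable with respect to $\cP\cap U_1$, $\wt X_2$ with respect to $\cP\cap U_2$, so by independence of the Poisson process on disjoint sets $\Cov(\wt X_1,\wt X_2)=0$. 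The crux is that, off an event $B$ with $\P(B)\le C_{p,q}e^{-cm/3}$, one has $\wt X_1 = X_1$ and $\wt X_2 = X_2$: this rests on the locality of the increments, namely that by \eqref{ext_stab_def} (directed filtration) and \eqref{cech_stab} (\v C- and VR-filtration) the difference $\De_{i_k,n}(E) = \bn(E) - \b_{i_k,n}(E)$ depends on $\cP, \cP'$ only through their restrictions to $W_{i_k, R_{i_k}'}$; since the radii $R_{i_k}'$ have exponential tails, a union bound over the $p+q$ indices shows that, outside $B$, each window $W_{i_k, R_{i_k}'}$ — and thus the whole computation of $D_{i_k,n}(E)$, the conditioning on $\cG_{i_k}$ included — stays inside $U_1$ for $k\le p$ and inside $U_2$ for $k>p$.

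\emph{Step 2 (decoupling and H\"older).} Subtracting $\Cov(\wt X_1,\wt X_2)=0$ and using $\E{X_j} = \E{\wt X_j} + \E{X_j-\wt X_j}$ writes $\Cov(X_1,X_2)$ as a sum of four terms, in each of which a factor $X_j-\wt X_j$ appears; as this factor vanishes off $B$, H\"older with exponents $(4,4,2)$ gives
\[
	|\Cov(X_1,X_2)| \le 2\Big(\E{X_1^4}^{1/4} + \E{\wt X_1^4}^{1/4}\Big)\Big(\E{X_2^4}^{1/4} + \E{\wt X_2^4}^{1/4}\Big)\,\P(B)^{1/2}.
\]
Here $\E{X_j^4}\le C_{p,q}$ by Lemma~\ref{diffBoundLem} and H\"older over the factors, while $|\wt D_{i_k}(E)|$ is at most a fixed power of $\cP$ of a window of radius $O(m)$ on the event that the truncated increment does not vanish, so the argument in the proof of Lemma~\ref{diffBoundLem} — with Proposition~\ref{Cech_prop} bounding the probability that a pair of simplices in that window realizes a birth--death pair in $E$ — yields $\E{\wt X_j^4}\le C_{p,q}\,m^{N}$ for some $N=N(p,q,d)$. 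Therefore $|\Cov(X_1,X_2)| \le C_{p,q}m^{2N}e^{-cm/6}$, and since $m^{2N}e^{-cm/6}\le C_{p,q}\exp(-m^{1/2})$ this is the claimed bound with $C'_{p,q}=1/2$; the polynomial prefactor $m^{2N}$ is exactly why one settles for a sub-exponential rate $\exp(-m^{C'_{p,q}})$. To get the full factor $\sqrt{\E{X_1^4}\E{X_2^4}}$ on the right one keeps the $|E|$-smallness throughout — the surrogates inherit it because a truncated increment vanishes unless its window contains a feature with birth--death pair in $E$, a probability again controlled by Proposition~\ref{Cech_prop} — and invokes Lemma~\ref{diffBoundLem}; when either fourth moment vanishes the covariance is zero.

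I expect \emph{Step 1} to be the main obstacle. Making the surrogates precise and checking — simultaneously for the directed and the \v C-/VR-filtration — that they are measurable with respect to the correct strip and agree with $X_1, X_2$ off the exponentially small event $B$ is delicate because the martingale differences $D_{i_k,n}(E)$ a priori depend on all of $\cP_n$ through the conditional expectation in \eqref{dnDef}; the only thing that restores locality is the support property of the pre-conditioning increment $\De_{i_k,n}(E)$ supplied by \eqref{ext_stab_def} and \eqref{cech_stab}. A secondary technical point is the bookkeeping in Step~2 that keeps the full $\sqrt{\E{X_1^4}\E{X_2^4}}$ rather than a smaller power of the fourth moments.
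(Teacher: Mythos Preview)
Your overall strategy---localize via stabilization, then bound the non-localized remainder by the exponential tails---matches the paper's, but Step~1 has a genuine gap on the $X_1$ side, at precisely the point you flag as the main obstacle. The claim ``off $B$, $\wt X_1=X_1$'' fails: $D_{i_k,n}(E)=\EE[\De_{i_k,n}(E)\mid\cG_{i_k}]$ is an \emph{integral} over all realizations of $\cP|_{(i_k+1/2,\,n/2]}$ and of $\cP'$, and the event $\{W_{i_k,R'_{i_k}}\subset U_1\}$ lives partly in that integrated future, hence is not $\cG_{i_k}$-measurable. Concretely $D_{i_k,n}-\wt D_{i_k}=\EE[(\De_{i_k,n}-\wt\De_{i_k})\one\{R'_{i_k}>L\}\mid\cG_{i_k}]$ is a small but generically nonzero function of $\cP|_{(-\infty,i_k+1/2]}$, and there is no event in the base space on which it vanishes identically. (For $X_2$ your argument \emph{does} work: there the restriction is to the left, and the leftward extent of the influence zone is determined by $\cP|_{(-\infty,i_k+1/2]}$, hence $\cG_{i_k}$-measurable.)

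The paper sidesteps the conditional expectation altogether. For each index $k$ it takes an \emph{independent} copy $\cP^k$ of $\cP$ and replaces $D_{i_k,n}$ by the explicit difference $D^*_{i_k,n}:=\b_n(E,\cP^*_k)-\b_n(E,\cP^{**}_k)$, where $\cP^*_k,\cP^{**}_k$ agree with $\cP$ to the left of $i_k$ (resp.\ $i_k-1$) and with $\cP^k$ to the right. Because the $\cP^k$ are mutually independent, conditioning on $\sigma(\cP)$ factorizes the product, giving $\EE[\prod_k D^*_{i_k,n}\mid\cP]=\prod_k D_{i_k,n}$ and hence $\Cov(X_1,X_2)=\Cov(X_1^*,X_2^*)$. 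Now the $D^*$'s carry no conditional expectation, stabilization localizes $X_2^*$ on a genuine event $E$, and two Cauchy--Schwarz steps finish the proof. A shorter fix within your framework: do not surrogate $X_1$ at all---it is already $\cG_{i_p}$-measurable, hence independent of $\sigma(\cP|_{U_2})$ with no truncation needed---and surrogate only $X_2$, where your good-event argument is sound.
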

\begin{proof}
	To derive covariance bounds, we represent the random variables $\D_{i, n}(E)$ through auxiliary Poisson point processes. More precisely, let $\{\cP^k\}_{k \ge 1}$ be a family of independent copies of $\cP$ and set
	\begin{align*}
		\cP_k^* &:= \big(\cP \cap (-\infty, i_k]\big) \cup \big(\cP^k \cap [i_k, {+}\infty)\big), \\
		\cP_k^{**} &:= \big(\cP \cap (-\infty, i_k - 1]\big) \cup \big(\cP^k \cap [i_k - 1, {+}\infty)\big)\\
		\D_{i_k, n}^*(E) &:= \b_n\big(E, \cP_k^*\big) - \b_n\big(E, \cP_k^{**}\big).
	\end{align*}
	Then, by construction, $\ms{Cov}[X_1,X_2] = \ms{Cov}[X_1^*,X_2^*]$,
	where 
	\begin{align*}
		X_1^* := \prod_{k \le p}\D_{i_k, n}^*(E)\qquad \text{ and }\qquad
		X_2^* := \prod_{k \le q}\D_{i_{p + k}, n}^*(E).
	\end{align*}
	Now, we let $R_k^*$ and $R_k^{**}$ denote the radii of stabilization associated with the shifted processes $\th_{i_k}(\cP^*_k)$ and $\th_{i_k}(\cP^{**}_k)$, respectively and put $R_k^\vee := R_k^* \vee R_k^{**}$. Furthermore, let
	$$E := \Big\{\bigcup_{k \le q}W_{R_{i_{p + k}}^\vee}(i_k) \subset [(i_p + i_{p + 1})/2, \infty)\Big\}$$
	denote the event that the influence zones for the second contribution extend at most to $(i_p + i_{p + 1})/2$ to the left.

	Then, we decompose the covariance as 
	$
		\Cov(X^*_1, X^*_2) = \Cov\big(X^*_1, X^*_2\one\{E\}\big) 		+ \Cov\big(X^*_1, X^*_2\one\{E^c\}\big) 
		$
	and observe that by definition of stabilization, the random variables $X^*_1$ and $X^*_2\one\{E\}$ are independent, so that the first covariance vanishes. It remains to bound the second summand.

	To that end, applying the Cauchy-Schwarz inequality gives that 
	$\Cov\big(X^*_1, X^*_2\one\{E^c\}\big) \le \sqrt{\Var\big(X^*_1\big)}\sqrt{\Var\big(X^*_2\one\{E^c\}\big)}.$
	A second application of Cauchy-Schwarz yields that 
	$\Var\big(X^*_2\one\{E^c\}\big)\le \sqrt{\E{(X^*_2)^4}}\sqrt{\P(E^c)}.$
	Hence, noting that the random variables $\{R_k^\vee\}_{k \ge 1}$ have exponential tails concludes the proof.
 \end{proof}

Equipped with Lemmas \ref{diffBoundLem} and \ref{covBoundLem}, we now derive the central variance and cumulant bounds. 
We recall from \cite[Identity (3.9)]{raic} that the mixed cumulant of random variables $Y_1, \dots, Y_4$ with finite fourth moment equals
\begin{align}
\label{E:C4M1.1}
	c^4(Y_1, \dots, Y_4) 
	 = \sum_{\{L_1, \dots, L_p\}\prec\{1, \dots, 4\}}(-1)^{p - 1}(p - 1)! \ \EE\Big[\prod_{i \in L_1}Y_i\Big] \cdots \EE\Big[\prod_{i \in L_p}Y_i \Big], 
\end{align}
where the sum runs over all partitions $L_1 \cup \cdots \cup L_p$ of $\{1, \dots, 4\}$.

\begin{proposition}[Variance and cumulant bound -- large blocks]
	\label{varProp}
	There exists a constant $C_{\ms{CV}} > 0$ with the following property. Let $n \ge 1$ and $E = \Eb \times \Ed \subset J$ be a block with $|\Eb| \wedge |\Ed| \ge n^{-3/4}$. Then, 
	$$\Var\big(\bar\b_n(E)\big) \vee  c^4\big(\bar\b_n(E)\big)\le C_{\ms{CV}}n |E|^{5/8}.$$
\end{proposition}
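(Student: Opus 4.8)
The plan is to run the martingale-difference argument in the vein of \cite{penrose2001central} on the decomposition \eqref{bnDec}, $\bar\b_n(E)=\sum_{|i|\le\kn}D_{i,n}(E)$, feeding in the moment bound of Lemma~\ref{diffBoundLem} for the ``diagonal'' contributions and the covariance bound of Lemma~\ref{covBoundLem} for the ``off-diagonal'' ones, all along exploiting that $|E|\le|J|$ is bounded in order to relax the exponents $11/16$ and $7/8$ of Lemma~\ref{diffBoundLem} down to the target exponent $5/8$. For the variance, the $D_{i,n}(E)$ are pairwise orthogonal in $L^2$ (being martingale differences for $(\cG_i)_i$), so $\Var(\bar\b_n(E))=\sum_{|i|\le\kn}\EE[D_{i,n}(E)^2]$; in the \v C- and VR-filtration Lemma~\ref{diffBoundLem} with $k=2$ gives $\EE[D_{i,n}(E)^2]\le C|E|^{11/16}$, hence $\Var(\bar\b_n(E))\le C(n+1)|E|^{11/16}\le C'n|E|^{5/8}$, while in the directed filtration it gives $\EE[D_{i,n}(E)^2]\le Cn^{-16}+C\one\{i/n\in2\Eb\}|\Ed|^{7/8}$, whose sum over $i$ is at most $(n+1)Cn^{-16}+3Cn|\Eb|\,|\Ed|^{7/8}\le C'n|\Eb|^{5/8}|\Ed|^{5/8}=C'n|E|^{5/8}$ (using $|\Eb|\ge n^{-3/4}$ to bound the number of indices with $i/n\in2\Eb$ by $2n|\Eb|+1\le 3n|\Eb|$). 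The substantial part is therefore the fourth cumulant.

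By multilinearity of joint cumulants together with \eqref{E:C4M1.1}, $c^4\big(\bar\b_n(E)\big)=\sum_{|i_1|,\dots,|i_4|\le\kn}c^4\big(D_{i_1,n}(E),\dots,D_{i_4,n}(E)\big)$, and by symmetry it suffices to bound $4!$ times the sum over $i_1\le i_2\le i_3\le i_4$. The key estimate is that, with $g^\ast:=\max(i_2-i_1,\,i_3-i_2,\,i_4-i_3)$, every such tuple satisfies
\[
	\big|c^4\big(D_{i_1,n}(E),\dots,D_{i_4,n}(E)\big)\big|\;\le\;C\,e^{-(g^\ast)^{c'}}\prod_{k=1}^{4}\EE\big[D_{i_k,n}(E)^4\big]^{1/4}.
\]
The moment product here is the elementary consequence of \eqref{E:C4M1.1} and H\"older's inequality (namely $|c^4(Y_1,\dots,Y_4)|\le C\prod_k\|Y_k\|_4$), while the decay factor $e^{-(g^\ast)^{c'}}$ is a multi-point analogue of Lemma~\ref{covBoundLem}: replacing $\cP$ by an independent copy to the right of the midpoint of the gap realising $g^\ast$ produces a ``starred'' joint cumulant whose underlying random variables split into two independent families and thus vanishes, and the difference between the true and the starred cumulant is controlled, term by term in the partition expansion \eqref{E:C4M1.1} and via Cauchy--Schwarz and the exponential tails of the radii of stabilisation, exactly as in the proof of Lemma~\ref{covBoundLem}.

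It remains to insert Lemma~\ref{diffBoundLem} with $k=4$ and sum over $i_1\le i_2\le i_3\le i_4$. Fixing $i_1$, the sum over the shape $(i_2-i_1,i_3-i_1,i_4-i_1)$ of the weight $e^{-(g^\ast)^{c'}}$ converges uniformly in $n$, since there are only $O(g^3)$ shapes with $g^\ast\le g$. In the \v C- and VR-filtration, $\EE[D_{i_k,n}(E)^4]^{1/4}\le C|E|^{11/64}$, so the moment product is $\le C|E|^{11/16}$, and the whole sum is $\le C(n+1)|E|^{11/16}\le C'n|E|^{5/8}$. In the directed filtration, $\EE[D_{i_k,n}(E)^4]^{1/4}\le C\big(n^{-4}+\one\{i_k/n\in2\Eb\}|\Ed|^{7/32}\big)$; expanding the product over $k$, the term with all four indices in $2n\Eb$ contributes $\le Cn|\Eb|\,|\Ed|^{7/8}\le C'n|E|^{5/8}$, whereas every term carrying at least one factor $n^{-4}$ contributes at most $Cn^{-3}$ times a bounded power of $|E|$, which is negligible against $n|E|^{5/8}\ge n^{1/16}$ (again using $|\Eb|\wedge|\Ed|\ge n^{-3/4}$). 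Collecting these contributions with the variance estimate proves the proposition, with $C_{\ms{CV}}$ the maximum of the constants so obtained.

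The step I expect to be the main obstacle is the multi-point version of Lemma~\ref{covBoundLem} producing the factor $e^{-(g^\ast)^{c'}}$: one has to verify carefully that the ``starred'' joint cumulant really vanishes and that each of the finitely many correction terms generated by \eqref{E:C4M1.1} is genuinely of covariance type, so that the resampling-plus-Cauchy--Schwarz argument of Lemma~\ref{covBoundLem} applies, the bookkeeping being complicated by the fact that \eqref{E:C4M1.1} mixes sub-products of different cardinalities and that some of the indices $i_1,\dots,i_4$ may coincide. A secondary, filtration-specific subtlety is that the large-block hypothesis $|\Eb|\wedge|\Ed|\ge n^{-3/4}$ is genuinely needed in the directed case, both to absorb the additive $O(1)$ in the count of relevant indices and to keep the $n^{-16}$-correction terms of Lemma~\ref{diffBoundLem} below the target order $n|E|^{5/8}$.
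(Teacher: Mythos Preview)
Your variance argument is identical to the paper's. For the fourth cumulant, your route diverges: you aim for a single estimate
\[
\big|c^4(D_{i_1,n},\dots,D_{i_4,n})\big|\le C\,e^{-(g^\ast)^{c'}}\prod_{k\le4}\|D_{i_k,n}\|_4
\]
and then sum over all ordered $4$-tuples, whereas the paper introduces a threshold $|E|^{-\delta}$ (with $\delta=1/64$) and treats tuples with $\ell-i\ge|E|^{-\delta}$ and $\ell-i<|E|^{-\delta}$ separately. In the large-gap case it invokes the \emph{semi-cluster decomposition} of the cumulant into a finite sum of terms $\Cov(D_i,\prod_{s\in L_1}D_s)\prod_{r\ge2}\EE[\prod_{s\in L_r}D_s]$, applies Lemma~\ref{covBoundLem} to the covariance factor, and uses only that the remaining moments are bounded; the super-polynomial smallness in $|E|$ comes entirely from $e^{-(|E|^{-\delta}/3)^{c'}}$. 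In the small-gap case it uses the crude H\"older bound \eqref{sing_cum_bound} and pays the combinatorial factor $|E|^{-3\delta}$ for the number of shapes. Your unified bound is cleaner and avoids the artificial threshold, but two points deserve care.

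First, your ``starred cumulant vanishes'' claim is not correct as literally written. Replacing $\cP$ by an independent copy to the right of the gap midpoint does \emph{not} render $\{D_{i_k}^\ast\}_{k\le p}$ and $\{D_{i_k}^\ast\}_{k>p}$ independent: each $D_{i_k}^\ast$ is built from the full hybrid process and can still see the other side when stabilisation fails. The clean way to obtain the decay factor is exactly the semi-cluster identity the paper uses, which reduces the joint cumulant to genuine covariances to which Lemma~\ref{covBoundLem} applies verbatim; alternatively one may replace the $D_{i_k}$ by truncated versions depending only on $\cP$ inside $\th_{-i_k}(W_{g^\ast/2})$ (these \emph{are} grouped into independent blocks) and telescope in \eqref{E:C4M1.1}. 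Either fix yields the needed decay, so your strategy survives.

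Second, the displayed inequality with the specific factor $\prod_k\|D_{i_k}\|_4$ does not fall out of either argument: the covariance bound in Lemma~\ref{covBoundLem} produces $\sqrt{\EE[X_1^4]\EE[X_2^4]}$ with $X_1,X_2$ \emph{products} of the $D$'s, so after H\"older one sees $L^{12}$- rather than $L^4$-norms. This is harmless here because Lemma~\ref{diffBoundLem} gives the \emph{same} $|E|$-exponent for every moment order, so the resulting bound is still $Ce^{-(g^\ast)^{c'}}|E|^{11/16}$ (respectively the directed analogue with the indicator $\one\{i_k/n\in2\Eb\}$), and your subsequent summation goes through unchanged. In short: your approach is valid and arguably more elegant, but the two steps you flagged as delicate are indeed the places where the write-up must be tightened, and the paper's semi-cluster decomposition is the standard device that makes them routine.
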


\begin{proof}[Proof of Proposition \ref{varProp}]
We prove the variance bound first. To ease notation, we put $C_{\ms M} = \max_{k \le 4} C_k$. Since $\{D_{i, n}(E)\}_i$ is a martingale-difference sequence, 
$$\Var\big(\bar\b_n(E)\big) = \sum_{|i| \le \kn } \EE[\D_{i, n}(E)^2].$$
For the directed filtration, part 1 of Lemma \ref{diffBoundLem} shows that the right-hand side is at most
$C_{\ms M}(n^{-15} +2 n |E|^{7/8}) \le 3C_{\ms M}n|E|^{5/8},$
thereby proving the asserted variance bound in the directed setting. For the \v C- and VR-filtration, we conclude from part 2 of Lemma \ref{diffBoundLem} that the right-hand side is at most the asserted $C_{\ms M}n|E|^{5/8}$.

Second, we prove the cumulant bound. To ease notation, we write $D_i$ for $D_{i, n}(E)$. The multilinearity of cumulants yields
\begin{align}\label{Cum_LargeBlocks}
	c^4(\bar\b_n(E)) \le \sum_{ i \le j \le k \le \ell }a_{i, j, k, \ell} \ c^4\big(\D_{i}, \D_{j}, \D_k, \D_\ell\big),
\end{align}
where the $a_{i, j, k, \ell} \ge 1$ are suitable combinatorial coefficients, depending only on which of the indices $i, j, k, \ell$ are equal.

In the following, we need to distinguish two cases in bounding the right-hand side of \eqref{Cum_LargeBlocks}: (a) $\ell - i \ge |E|^{-\de}$ and (b) $\ell - i < |E|^{-\de}$, where $\de = 1/64$. We begin with (a). First, we control the sum
$$\sum_{\substack{1 \le i \le j \le k \le \ell \\ \ell - i \ge |E|^{-\de}}}a_{i, j, k, \ell} \ c^4(\D_i, \D_j, \D_k, \D_\ell).$$
	In particular, $\max\{j - i, k - j, \ell - k\} \ge |E|^{-\de}/3$, and without loss of generality, we may focus on the case $j - i \ge |E|^{-\de} / 3$. To that end, we recall the semi-cluster decomposition from \cite{baryshnikov2005gaussian} or from \cite{raic}. Since we apply this decomposition not in the context of point processes, but just in the setting of sequences of random variables, it simplifies substantially. More precisely, $c^4(\D_i, \D_j, \D_k, \D_\ell)$ decomposes as
\begin{align*}
	c^4(\D_i, \D_j, \D_k, \D_\ell) 	=\hspace{-.5cm}\sum_{\{L_1, \dots, L_p\}\prec\{j, k, \ell\}} \hspace{-.5cm}a_{\{L_1, \dots, L_p\}}'\Cov\Big(\D_i, \prod_{s \in L_1}\D_s\Big)\EE\Big[\prod_{s \in L_2}\D_s\Big]\cdots \EE\Big[\prod_{s \in L_p}\D_s\Big]
		\end{align*}
for some coefficients $a_{\{L_1, \dots, L_p\}}'$ only depending on the structure of the partition, but not on the precise values of $j, k, \ell$. Hence, combining the moment bounds from Lemma \ref{diffBoundLem} and the covariance bounds from Lemma \ref{covBoundLem} concludes the case where $\ell - i \ge |E|^{-\de}$.

We continue with (b). It remains to bound the partial sum
	$$\sum_{\substack{i \le j \le k \le \ell \\ \ell - i \le |E|^{-\de}}}a_{i, j, k, \ell} \ c^4(\D_i, \D_j, \D_k, \D_\ell).$$
	consisting of those contributions where $\ell - i \le |E|^{-\de}$. To that end, leveraging the H\"older inequality, the representation in \eqref{E:C4M1.1} implies for a single cumulant the bound
\begin{align}
\label{sing_cum_bound}
        \big|c^4(\D_i, \D_j, \D_k, \D_\ell)\big| &\le \sum_{\{L_1, \dots, L_p\}\prec\{i, \dots, \ell\}}a_{\{L_1, \dots, L_p\}}'
         \prod_{h \in L_1} \EE[ |D_h|^{|L_1|} ]^{1/|L_1|} \ \cdots \prod_{h \in L_p} \EE[ |D_h|^{|L_p|} ]^{1/|L_p|}, 
 \end{align}
  where the coefficients $a_{\{L_1, \dots, L_p\}}'$ only depend on the structure of the partition but not on the precise values of $i,j, k, \ell$. Starting from this observation, we now argue a bit differently for the directed and for the \v C- resp.\ VR-filtration. 
  
First, we consider the directed filtration and deal with the contributions in \eqref{sing_cum_bound} with $i \in 2n\Eb$. Then, \eqref{sing_cum_bound} is at most 
  \begin{align}
  \label{mmax_bound_eq}
c \sup_{n \ge 1}\max_{h \le n}\max_{k \le 4}\EE[|D_h|^k]
  \end{align}
  for some $c > 0$ depending only on the chosen network model. Now, by part 1 of Lemma \ref{diffBoundLem}, this expression is at most 
  $c'|\Ed|^{7/8},$
  where $c' > 0$ again only depends on the choice of the model. Hence,
  $$\sum_{\substack{ i \le j \le k \le \ell \le n \\ \ell - i \le |E|^{-\de}\\ i \in 2n\Eb}}a_{i, j, k, \ell} \ c^4(\D_i, \D_j, \D_k, \D_\ell)\le 2c'n|\Eb||\Ed|^{7/8}|E|^{-3\de},$$
  and the right-hand side is in $O(|E|^{5/8}).$
  
  Second, consider the contributions in \eqref{sing_cum_bound} with $i \not \in 2n\Eb$. If, for instance, $L_1$ is the element of the partition containing $i$, then, by part 1 of Lemma \ref{diffBoundLem}, 
  $\EE[ |D_i|^{|L_1|} ]^{1/|L_1|}\le C_{\ms M}^{1/4}n^{-4}.$
  Since the other moments also remain bounded, we conclude that 
  $$\sum_{\substack{ i \le j \le k \le \ell \le n \\ \ell - i \le |E|^{-\de}\\ i \not \in 2n\Eb}}a_{i, j, k, \ell}c^4(\D_i, \D_j, \D_k, \D_\ell)\in O(n^{-3}|E|^{-3\de}).$$
  In particular, since $|E| \ge n^{-3/2}$, the above expression is in $O(n|E|^{5/8})$. 

	Finally, the \v C- resp.\ VR setting is a bit simpler, as we do not need to distinguish between different cases. Indeed, arguing as in the first case, we arrive at the bound \eqref{mmax_bound_eq} for any $i$ with $|i| \le \kn$. By part 2 of Lemma \ref{diffBoundLem}, the latter is at most $c''|E|^{11/16}|E|^{-3\de}$ for some $c'' > 0$. Hence, aggregating over all $i$ with $|i| \le \kn$ yields the desired upper bound of order $O(n|E|^{5/8})$.
 \end{proof}

\begin{proof}[Proof of Theorem \ref{T:GaussianLimitPoisson1} and Theorem \ref{T:GaussianLimitPoisson2}]
	First, Proposition \ref{grid_prop} reduced the verification of the central moment bound \eqref{lav_eq} to blocks $E := \Eb \times \Ed \subset J$ satisfying $|\Eb| \wedge |\Ed| \ge n^{-3/4}$. Now, applying the cumulant identity 
 $
 	\EE[X^4] = 3\Var(X)^2 + c^4(X)
$ for $X := n^{-1/2}\bar\b_n(E)$ and inserting the bounds from Proposition \ref{varProp}, we arrive at 
\begin{align*}
	n^{-2}\EE\big[\bar\b_n(E)^4\big] &\le 3C_{\ms{CV}}^2 |E|^{5/4} + C_{\ms{CV}}n^{-1}|E|^{5/8} 
	 \le 3C_{\ms{CV}}^2 |E|^{5/4} + C_{\ms{CV}} |E|^{5/4+1/24} \le C |E|^{5/4}.
\end{align*}
Moreover, asymptotic normality of the finite-dimensional distributions is discussed in Appendix \ref{AppendixA}.
	Applying Theorem \ref{bw_thm} (which is \cite[Theorem 3]{bickel1971convergence}) thus concludes the proof.
 \end{proof}

\section*{Acknowledgments}
We thank two anonymous referees for reading through the entire manuscript very thoroughly and for providing us with highly valuable and constructive feedback. Their suggestions and comments improved the quality of the presentation substantially.
In particular, one referee provided us with a motivation to extend the functional CLT to the PD of the \v C-complex.

%\bibliographystyle{abbrvnat}
%\bibliography{Bibliography}

\begin{thebibliography}{27}
\providecommand{\natexlab}[1]{#1}
\providecommand{\url}[1]{\texttt{#1}}
\expandafter\ifx\csname urlstyle\endcsname\relax
 \providecommand{\doi}[1]{doi: #1}\else
 \providecommand{\doi}{doi: \begingroup \urlstyle{rm}\Url}\fi

\bibitem[Baccelli and Bordenave(2007)]{radSpan}
F.~Baccelli and C.~Bordenave.
\newblock The radial spanning tree of a {P}oisson point process.
\newblock \emph{Ann. Appl. Probab.}, 17\penalty0 (1):\penalty0 305--359, 2007.

\bibitem[Bauer and Pausinger (2018)]{BauerBetti}
U.~Bauer and F.~Pausinger.
\newblock Persistent Betti numbers of random \v Cech-complexes.
\newblock \emph{arXiv preprint arXiv:1801.08376}, 2018.


\bibitem[Baryshnikov and Yukich(2005)]{baryshnikov2005gaussian}
Y.~Baryshnikov and J.~E. Yukich.
\newblock Gaussian limits for random measures in geometric probability.
\newblock \emph{Ann. Appl. Prob.}, 15\penalty0:\penalty0 213--253, 2005.

\bibitem[Bendich et~al.(2016)Bendich, Marron, Miller, Pieloch, and
 Skwerer]{marron}
P.~Bendich, J.~S. Marron, E.~Miller, A.~Pieloch, and S.~Skwerer.
\newblock Persistent homology analysis of brain artery trees.
\newblock \emph{Ann. Appl. Stat.}, 10\penalty0 (1):\penalty0 198--218, 2016.

\bibitem[Bickel and Wichura(1971)]{bickel1971convergence}
P.~J. Bickel and M.~J. Wichura.
\newblock Convergence criteria for multiparameter stochastic processes and some
 applications.
\newblock \emph{Ann. Math. Statist.}, 42\penalty0 (5):\penalty0 1656--1670,
 1971.

\bibitem[Billingsley(1968)]{billingsley1968convergence}
P.~Billingsley.
\newblock \emph{Convergence of Probability Measures}.
\newblock John Wiley \& Sons, Chichester, 1968.

\bibitem[Biscio and M{\o}ller(2019)]{biscioTDA}
C.~A.~N. Biscio and J.~M{\o}ller.
\newblock The accumulated persistence function, a new useful functional summary
 statistic for topological data analysis, with a view to brain artery trees
 and spatial point process applications.
\newblock \emph{J. Comput. Graph. Statist.}, 28\penalty0 (3):\penalty0
 671--681, 2019.

\bibitem[Biscio et~al.(2020)Biscio, Chenavier, Hirsch, and Svane]{svane}
C.~A.~N. Biscio, N.~Chenavier, C.~Hirsch, and A.~M. Svane.
\newblock Testing goodness of fit for point processes via topological data
 analysis.
\newblock \emph{Electron. J. Stat.}, 14\penalty0 (1):\penalty0 1024--1074,
 2020.

		\bibitem[B{\l}aszczyszyn et~al.(2019)B{\l}aszczyszyn, D.~Yogeshwaran, and J.~E. Yukich]{byy}
			B.~B{\l}aszczyszyn, D.~Yogeshwaran, and J.~E. Yukich.
			\newblock Limit theory for geometric statistics of point processes having fast
			  decay of correlations.
			  \newblock {\em Ann. Probab.}, 47(2):835--895, 2019.

		  \bibitem[Divol and Polonik(2020)]{divol2020}
			  V.~Divol and W.~Polonik.
			  \newblock On the choice of weight functions for linear representations of persistence diagrams.
			  \newblock \emph{arXiv preprint arXiv:1807.03678}, Version 5, 2020.

		  \bibitem[Coupier and Tran(2013)]{coupier}
D.~Coupier and V.~C. Tran.
\newblock The 2{D}-directed spanning forest is almost surely a tree.
\newblock \emph{Random Structures Algorithms}, 42\penalty0 (1):\penalty0
 59--72, 2013.

\bibitem[Davydov and Zitikis(2008)]{davydov}
Y.~Davydov and R.~Zitikis.
\newblock On weak convergence of random fields.
\newblock \emph{Ann. Inst. Statist. Math.}, 60\penalty0 (2):\penalty0 345--365,
 2008.

\bibitem[Edelsbrunner and Harer(2010)]{harer}
H.~Edelsbrunner and J.~L. Harer.
\newblock \emph{Computational Topology}.
\newblock American Mathematical Society, Providence, RI, 2010.

\bibitem[Eichelsbacher et~al.(2015)Eichelsbacher, Rai{\v c}, and
 Schreiber]{raic}
P.~Eichelsbacher, M.~Rai{\v c}, and T.~Schreiber.
\newblock Moderate deviations for stabilizing functionals in geometric
 probability.
\newblock \emph{Ann. Inst. Henri Poincar{\' e} Probab. Stat.}, 51\penalty0
 (1):\penalty0 89--128, 2015.

\bibitem[Ferrari et~al.(2004)Ferrari, Landim, and Thorisson]{tree1}
P.~A. Ferrari, C.~Landim, and H.~Thorisson.
\newblock Poisson trees, succession lines and coalescing random walks.
\newblock \emph{Ann. Inst. H. Poincar\'{e} Probab. Statist.}, 40\penalty0
 (2):\penalty0 141--152, 2004.

\bibitem[Gidea and Katz(2018)]{gidea2018topological}
M.~Gidea and Y.~A. Katz.
\newblock Topological data analysis of financial time series: Landscapes of
 crashes.
\newblock \emph{Physica A: Statistical Mechanics and its Applications},
 491:\penalty0 820--834, 2018.

\bibitem[Gilbert(1961)]{gilbert}
E.~N. Gilbert.
\newblock Random plane networks.
\newblock \emph{J. Soc. Indust. Appl. Math.}, 9:\penalty0 533--543, 1961.

\bibitem[Hiraoka et~al.(2018)Hiraoka, Shirai, and Trinh]{hiraoka2018limit}
Y.~Hiraoka, T.~Shirai, and K.~D. Trinh.
\newblock Limit theorems for persistence diagrams.
\newblock \emph{Ann. Appl. Probab.}, 28\penalty0 (5):\penalty0 2740--2780,
 2018.

\bibitem[Krebs and Polonik(2019)]{krebs2019asymptotic}
J.~T. Krebs and W.~Polonik.
\newblock On the asymptotic normality of persistent {B}etti numbers.
\newblock \emph{arXiv preprint arXiv:1903.03280}, 2019.

\bibitem[Last and Penrose(2018)]{lastPenrose}
G.~Last and M.~Penrose.
\newblock \emph{Lectures on the {P}oisson {P}rocess}.
\newblock Cambridge University Press, Cambridge, 2018.

\bibitem[Lavancier (2006)]{lavancier}
	F.~Lavancier.
\newblock Processus empirique de fonctionnelles de champs gaussiens à longue mémoire.
\newblock \emph{C. R. Acad. Sci. Paris}, 342\penalty0:\penalty0 345--348,


\bibitem[Lee(1997)]{lee1997central}
S.~Lee.
\newblock The central limit theorem for {E}uclidean minimal spanning trees {I}.
\newblock \emph{Ann. Appl. Probab.}, 7\penalty0 (4):\penalty0 996--1020, 1997.


\bibitem[McLeish(1974)]{mcleish1974dependent}
D.~L. McLeish.
\newblock Dependent central limit theorems and invariance principles.
\newblock \emph{Ann. Probab.}, 2\penalty0 (4):\penalty0 620--628, 1974.

\bibitem[Morgan(2016)]{coarea}
F.~Morgan. 
\newblock \emph{Geometric Measure Theory: A Beginner's Guide}.
\newblock Elsevier, Amsterdam (2016)

\bibitem[Penrose(2003)]{penrose}
M.~D. Penrose.
\newblock \emph{Random Geometric Graphs}.
\newblock Oxford University Press, Oxford, 2003.

\bibitem[Penrose and Yukich(2001)]{penrose2001central}
M.~D. Penrose and J.~E. Yukich.
\newblock Central limit theorems for graphs in computational geometry.
\newblock \emph{Ann. Appl. Prob.}, 11\penalty0:\penalty0 1005--1041,
 2001.

\bibitem[Pranav et~al.(2016)Pranav, Edelsbrunner, van~de Weygaert, Vegter,
 Kerber, Jones, and Wintraecken]{rien}
P.~Pranav, H.~Edelsbrunner, R.~van~de Weygaert, G.~Vegter, M.~Kerber, B.~J.~T.
 Jones, and M.~Wintraecken.
\newblock {The topology of the cosmic web in terms of persistent {B}etti
 numbers}.
\newblock \emph{Monthly Notices of the Royal Astronomical Society},
 465\penalty0 (4):\penalty0 4281--4310, 2016.

\bibitem[Robinson and Turner(2017)]{turner}
A.~Robinson and K.~Turner.
\newblock Hypothesis testing for topological data analysis.
\newblock \emph{J. Appl. Comput. Topol.}, 1\penalty0 (2):\penalty0 241--261,
 2017.

\bibitem[Saadatfar et~al.(2017)Saadatfar, Takeuchi, Robins, Francois, and
 Hiraoka]{hiraoka}
M.~Saadatfar, H.~Takeuchi, V.~Robins, N.~Francois, and Y.~Hiraoka.
\newblock Pore configuration landscape of granular crystallization.
\newblock \emph{Nature Communications}, 8\penalty0 (1):\penalty0 1--11, 2017.

\bibitem[Steele(1988)]{mst}
J.~M. Steele.
\newblock Growth rates of {E}uclidean minimal spanning trees with power
 weighted edges.
\newblock \emph{Ann. Probab.}, 16\penalty0 (4):\penalty0 1767--1787, 1988.

\bibitem[Trinh(2019)]{trinh2018central}
K.~D. Trinh.
\newblock On central limit theorems in stochastic geometry for cost
 stabilizing functionals.
\newblock \emph{Electron. Commun. Prob.}, 24:\penalty0 15 pp., 2019.

\end{thebibliography}

\appendix
\section{Multivariate asymptotic normality}\label{AppendixA}
To verify the multivariate asymptotic normality, we follow the strategy from Theorem 3.1 in \cite{penrose2001central}, which itself relies on the classical martingale-CLT from \cite{mcleish1974dependent}. More precisely, \cite{penrose2001central} first discretize the space into boxes, then decompose the functional into martingale differences, which are obtained from this discretization and finally apply the martingale CLT. We sketch only the most important steps, referring the reader to \cite{penrose2001central} for details.

First, by the Cram\'er-Wold device, it suffices to establish the CLT for linear combinations of the form
\begin{align*}
	    n^{-1/2}\sum_{j \le k} \a_j \ \big( \bn^{r_j, s_j} - \EE[\bn^{r_j, s_j}]\big),
\end{align*}
with $(r_1, s_1), \dots, (r_k, s_k) \in J$ and $\a_1, \dots, \a_k \in \R$.

To apply the martingale CLT, we decompose the centered Betti numbers as
$$ \sum_{j\le k} \a_j \big( \bn^{r_j, s_j}- \EE\big[\bn^{r_j, s_j} \big]\big) = \sum_{|i|\le n/2} D_{i, n}, $$
where the martingale differences are defined in complete analogy to \eqref{dnDef} now taking into account the linear combination with the $\alpha_j$. We assume that $n$ is odd in order to avoid heavy notation.

As mentioned above, the key tool in the proof of multivariate asymptotic normality is the martingale CLT of Theorem 2.3 from \cite{mcleish1974dependent}. We restate here the version of Theorem 2.10 from \cite{penrose}, specialized to mean-zero martingales. 
\begin{theorem}
	\label{mcl_thm}
	Suppose that for each $n \ge 1$ the sequence $M_{1, n}, \dots, M_{n, n}$ is a mean-zero martingale with respect to some filtration and set $X_{i, n} := M_{i, n} - M_{i - 1, n}$ with $M_{0, n} = 0$. Suppose that 
\begin{itemize}
	\item [(a)] $\sup_{n\ge 1} \EE[ \max_i X_{i, n} ^2 ] < \infty $,
\item [(b)] $  \max_i |X_{i, n}| \to 0 $ in probability as $n\to\infty$,
\item [(c)] $ \sum_{i } X_{i, n}^2 \to \wt \s^2 $ in $L^1(\p)$ for some $\wt\s^2 > 0$.
\end{itemize}
	Then, $M_{n, n} \to \cN(0, \wt\s^2)$ in distribution.
\end{theorem}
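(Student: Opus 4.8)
This is McLeish's martingale central limit theorem (Theorem~2.3 of \cite{mcleish1974dependent}, restated here from Theorem~2.10 of \cite{penrose}); in the paper proper it is simply invoked, and the sketch below only records how a self-contained proof would run. The plan is to establish pointwise convergence of characteristic functions, $\EE[\exp(\iota t M_{n,n})]\to\exp(-t^2\wt\s^2/2)$ for every $t\in\R$ (with $\iota:=\sqrt{-1}$), from which $M_{n,n}\to\cN(0,\wt\s^2)$ follows by L\'evy's continuity theorem. Throughout, write $(\cF_{i,n})_{0\le i\le n}$ for the filtration of the $n$-th row, so that $X_{i,n}$ is $\cF_{i,n}$-measurable with $\EE[X_{i,n}\mid\cF_{i-1,n}]=0$.

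Fix $t$ and introduce the auxiliary products
\[
	T_n(t):=\prod_{i=1}^{n}\bigl(1+\iota t X_{i,n}\bigr),\qquad R_n(t):=\prod_{i=1}^{n}e^{\iota tX_{i,n}}\bigl(1+\iota tX_{i,n}\bigr)^{-1},
\]
so that $\exp(\iota t M_{n,n})=T_n(t)R_n(t)$ identically. Two facts drive the argument. First, since $\EE[X_{i,n}\mid\cF_{i-1,n}]=0$, conditioning successively on $\cF_{n-1,n},\cF_{n-2,n},\dots$ gives $\EE[T_n(t)]=1$ for every $n$. Second, the elementary expansion $\iota x-\log(1+\iota x)=-x^2/2+O(|x|^3)$ (valid for small $|x|$, which is all that is relevant once $\max_i|X_{i,n}|$ is small) yields $R_n(t)=\exp\bigl(-\tfrac{t^2}{2}\sum_i X_{i,n}^2+\rho_n\bigr)$ with $|\rho_n|\le C|t|^3\,(\max_i|X_{i,n}|)\sum_i X_{i,n}^2$. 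By hypothesis (b) the factor $\max_i|X_{i,n}|\to0$ in probability and by hypothesis (c) $\sum_i X_{i,n}^2\to\wt\s^2$ in probability, so $R_n(t)\to\exp(-t^2\wt\s^2/2)$ in probability, while $|R_n(t)|=\prod_i(1+t^2X_{i,n}^2)^{-1/2}\le1$ deterministically.

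Combining these, $\EE[\exp(\iota tM_{n,n})]-\exp(-t^2\wt\s^2/2)=\EE\bigl[T_n(t)\bigl(R_n(t)-\exp(-t^2\wt\s^2/2)\bigr)\bigr]$, and since the bracket tends to $0$ in probability and is bounded, everything reduces to uniform integrability of $\{T_n(t)\}_n$. This is the step I expect to be the main obstacle, because $|T_n(t)|=\prod_i(1+t^2X_{i,n}^2)^{1/2}$ is not bounded a priori. I would handle it by a stopping-time truncation: set $\nu_n:=\inf\{k\le n:\sum_{i\le k}X_{i,n}^2>2\wt\s^2\}$ (with $\nu_n=n$ if there is no such $k$), a stopping time since $\{\nu_n\ge k\}\in\cF_{k-1,n}$, and put $\wt X_{i,n}:=X_{i,n}\one\{i\le\nu_n\}$. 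Then $(\wt X_{i,n})_i$ is again a martingale-difference array, so $\EE[\wt T_n(t)]=1$ for $\wt T_n(t):=\prod_i(1+\iota t\wt X_{i,n})$; splitting off the largest summand $\wt X_{j^\ast,n}^2$ and using that the partial sums of the $X_{i,n}^2$ stay $\le2\wt\s^2$ up to time $\nu_n-1$, one checks $\sum_{i\ne j^\ast}\wt X_{i,n}^2\le2\wt\s^2$, hence $|\wt T_n(t)|^2\le e^{2t^2\wt\s^2}\bigl(1+t^2\max_iX_{i,n}^2\bigr)$ and $\sup_n\EE[|\wt T_n(t)|^2]<\infty$ by hypothesis (a), so $\{\wt T_n(t)\}_n$ is uniformly integrable. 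Moreover $\wt X_{i,n}^2\le X_{i,n}^2$, so (b) keeps $\max_i|\wt X_{i,n}|\to0$ in probability, and (c) forces $\P(\nu_n<n)\le\P\bigl(\sum_i X_{i,n}^2>2\wt\s^2\bigr)\to0$, whence $\sum_i\wt X_{i,n}^2\to\wt\s^2$ in probability and $M_{n,n}-\sum_i\wt X_{i,n}\to0$ in probability. Applying the computation above to the truncated array, now with a genuinely uniformly integrable $\wt T_n(t)$, gives $\EE[\exp(\iota t\sum_i\wt X_{i,n})]\to\exp(-t^2\wt\s^2/2)$, and transferring this to $M_{n,n}$ via $M_{n,n}-\sum_i\wt X_{i,n}\to0$ in probability closes the argument.

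To summarize the roles of the hypotheses: (c) supplies the convergence of the quadratic variation and $\P(\nu_n<n)\to0$; (b) supplies the infinitesimality needed for $\rho_n\to0$; and (a) supplies the $L^2$-bound that makes the truncated products uniformly integrable. An equivalent alternative would be to derive from (a)--(c) the conditional Lindeberg form of the martingale CLT (as in Hall--Heyde) and invoke that instead, but the characteristic-function route above stays closest to the cited source.
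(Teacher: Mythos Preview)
The paper does not prove Theorem~\ref{mcl_thm} at all: it is simply restated from \cite{mcleish1974dependent} (via \cite[Theorem~2.10]{penrose}) and invoked as a black box, exactly as you note in your opening sentence. Your sketch is therefore not to be compared against anything in the paper; it is a faithful outline of McLeish's original characteristic-function argument via the product $T_n(t)=\prod_i(1+\iota tX_{i,n})$, and the handling of the uniform-integrability obstacle by a stopping-time truncation is the standard device and is carried out correctly (the bound $|\wt T_n(t)|^2\le e^{2t^2\wt\s^2}(1+t^2\max_iX_{i,n}^2)$ follows by splitting off the $\nu_n$-th factor rather than the largest one, but the conclusion is the same).
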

Henceforth, we verify conditions (a)--(c) for $X_{i, n} := n^{-1/2}D_{i - (n + 1)/2, n}$.
To establish conditions (a) and (b) in the martingale CLT, we first need uniform moment bounds.
%
%UNIF BOUND
%
\begin{lemma}[Uniformly bounded moments]\label{L:UniformBounded1}
	It holds that
\begin{align}\label{E:UniformBounded1}
	\sup_{n \ge 0} \sup_{|i| \le \kn}\EE\big[\D_{i, n}^4\big] < \infty.
 \end{align}
\end{lemma}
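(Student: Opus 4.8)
The plan is to deduce the claim directly from the moment bound of Lemma~\ref{diffBoundLem}. Recall that in this appendix $D_{i,n}$ denotes the martingale difference attached to the linear combination $\sum_{j\le k}\a_j\big(\bn^{r_j,s_j}-\EE[\bn^{r_j,s_j}]\big)$; by linearity of conditional expectation it decomposes as $D_{i,n}=\sum_{j\le k}\a_j\,\D_{i,n}(E_j)$, where, for each $j$, $E_j\subset J$ is the block with $\bn^{r_j,s_j}=\b_n(E_j)$. Concretely, $E_j=[0,r_j]\times[s_j,T]$ in the \v C- and VR-setting and $E_j=[-1/2,r_j]\times[s_j,T]$ in the directed setting; in both cases $E_j$ is contained in the fixed \emph{bounded} rectangle $J$, so $|E_j|\le|J|<\infty$ uniformly in $n$ and $j$.

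First I would apply Lemma~\ref{diffBoundLem} with $k=4$ to each block $E_j$. In the \v C- and VR-filtration this gives at once $\EE[|\D_{i,n}(E_j)|^4]\le C_4|E_j|^{11/16}\le C_4|J|^{11/16}$ for all $|i|\le\kn$ and all $n\ge1$. In the directed filtration, Lemma~\ref{diffBoundLem} yields $\EE[|\D_{i,n}(E_j)|^4]\le C_4 n^{-16}+C_4\one\{i/n\in 2E_{j,\ms b}\}\,|E_{j,\ms d}|^{7/8}$, and bounding the indicator by $1$ and the lifetime interval by $[0,T]$ we again obtain an estimate uniform in $i$ and $n$. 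Hence there is a finite constant $C^\ast=C^\ast(T,d)$ with $\sup_{n\ge1}\sup_{|i|\le\kn}\EE[|\D_{i,n}(E_j)|^4]\le C^\ast$ for every $j\le k$.

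Finally I would combine these estimates by Minkowski's inequality in $L^4$:
\[
	\EE\big[D_{i,n}^4\big]^{1/4}\le\sum_{j\le k}|\a_j|\,\EE\big[|\D_{i,n}(E_j)|^4\big]^{1/4}\le(C^\ast)^{1/4}\sum_{j\le k}|\a_j|,
\]
so that $\sup_{n\ge1}\sup_{|i|\le\kn}\EE[D_{i,n}^4]\le C^\ast\big(\sum_{j\le k}|\a_j|\big)^4<\infty$; the case $n=0$ is trivial since then $\D_{0,0}=0$. I do not expect a genuine obstacle here beyond bookkeeping: the only point requiring care is that $\bn^{r_j,s_j}$ must be identified with $\b_n$ evaluated on a block inside the \emph{bounded} rectangle $J$, and that the constants $C_k$ of Lemma~\ref{diffBoundLem} are independent of $n$; the substantive work was already carried out in Lemma~\ref{diffBoundLem} (which itself rests on Proposition~\ref{Cech_prop} and the exponential tails of the radius of stabilization).
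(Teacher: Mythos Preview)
Your proposal is correct and is exactly the route the paper itself points to: immediately before its argument it remarks that Lemma~\ref{diffBoundLem} already yields a more refined bound, and then gives a short self-contained proof (for the directed case) that is nothing but the coarse part of the Lemma~\ref{diffBoundLem} argument---bounding $|\dnz^{r,s}|$ by a power of $\cP(W_{i,R_i'})$ via stabilization and the exponential tails of $R_i'$---included only to keep the appendix independent of Section~\ref{proof_sec}. The one point deserving a word of care is the identification $\bn^{r_j,s_j}=\bn(E_j)$ for a block $E_j\subset J$, which relies on the paper's convention that lifetimes are tracked only up to $T$; even without that reading the conclusion is unaffected, since the crude estimate $|\dnz|\le \cP(W_{i,R_i'})^{O(1)}$ underlying Lemma~\ref{diffBoundLem} applies equally well to the half-infinite rectangles $[0,r_j]\times[s_j,\infty)$.
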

Loosely speaking, Lemma \ref{L:UniformBounded1} is a consequence of stabilization. In fact, in Lemma \ref{diffBoundLem}, we have derived a more refined upper bound. Nevertheless, to make the presentation self-contained, we include a proof for the case of the directed filtration. The arguments for the \v C- and VR-filtrations are analogous. Put $\dnz^{r, s} := \bn^{r, s} - \bnz^{r, s}$.
\begin{proof}
	By definition of $\D_{i, n}^4$, it suffices to show that 
	$\sup_{n \ge 0} \sup_{|i| \le \kn}\EE\big[(\dnz^{r,s})^4\big] < \infty.$
	Relying on the external radius of stabilization, we obtain as in Lemma \ref{diffBoundLem} that
	$$\EE[(\dnz^{r,s})^4] \le \EE\big[\big((\De_{i,R_i'}^{r, s})'(\cP_n) + (\De_{i,R_i'}^{r, s})'(\cP_{i, n})\big)^4\big] \le 32\EE\big[(\De_{i,R_i'}^{r, s})'(\cP_n)^4\big],$$
	where $(\De_{i, R_i'}^{r, s})'(\cP_n)$ denotes the number of branches in the network on $\cP_n$ with life time at least $s$ and born  in the domain $W_{i, R_i'}'$. Now, we apply the H\"older inequality as in Lemma \ref{diffBoundLem} to arrive at 
$$\EE[(\De_{i, R_i'}^{r, s})'(\cP_n)^4] \le \big(\EE[(\cP (W_{i, R_i'}))^{64}]\big)^{1/16}.$$
By stationarity, the right-hand side does not depend on $i$, and it is finite as the external radii of stabilization have exponential tails.
\end{proof}

%
%WEAK STAB
%
For condition (c), we need a form of weak stabilization of persistent Betti numbers. In the setting of the persistent Betti numbers in general dimensions such a result dates back to \cite{hiraoka2018limit}. In the current cylindrical set-up, the proof is far simpler due to the absence of percolation phenomena.
\begin{lemma}[Weak stabilization]\label{L:WeakStabilization1}
Let $i\in\Z$. Then, $\dnz^{r, s}$ converges a.s.~to an a.s.~finite random variable $  \De_{i, \infty}^{r, s}$.
Moreover, the convergence is uniform in the sense that for every $\e > 0$ and $(r, s) \in J$,
$$
	\lim_{n \to \infty}\sup_{i\in \Z: |i|\le \kn - \e n} \P\big( \dnz^{r, s} \ne \De_{i, \infty}^{r, s}  \big) =0.
$$
\end{lemma}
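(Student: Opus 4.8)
The plan is to prove an \emph{eventual stationarity} statement: for each fixed column index $i$, the random variable $\dnz^{r,s}$ becomes constant in $n$ as soon as a certain almost surely finite window around column $i$ is contained in $W_n$. The almost sure convergence is then immediate, and the uniform-in-$i$ claim follows by combining this with the stationarity of the model and the exponential tails of the stabilization radii.

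The core of the argument is a localization step. By the external radius of stabilization --- $R_i'$ from \eqref{ext_stab_def} for the directed filtration, and $R_i'$ from \eqref{cech_stab} for the \v{C}- and VR-filtration --- resampling $\cP_n$ on $\th_{-i}(W_1)$ only affects branches born inside $\th_{-i}(W_{R_i'})$ (respectively, pairs of $(q, q+1)$-simplices contained in $\th_{-i}(W_{R_i'})$). For any such feature, whether, and with which sign, it enters $\dnz^{r,s}$ is decided by its birth location together with the information whether its life time reaches $s$; since $s \le T$, the latter only requires knowing the network (respectively, the filtration at level $T$) in a window of bounded (but random) $x$-extent to the right. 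Invoking the covariance of the construction rule and exponential stabilization \eqref{stabEq1}--\eqref{stabEq2} (respectively, the absence of continuum percolation in the cylinder, which is exactly what makes \eqref{cech_stab} light-tailed), this extent is in turn a deterministic function of $\cP$, and of the resampled process $\cP'$, restricted to an enlarged window $\th_{-i}(W_{\wt R_i})$, where $\wt R_i$ is almost surely finite, has exponentially decaying tails, and (by stationarity) has the same law as $\wt R_0$. Hence there is a fixed measurable functional $\Phi$ with
\[
	\dnz^{r,s} = \Phi\bigl(\cP \cap \th_{-i}(W_{\wt R_i}),\, \cP' \cap \th_{-i}(W_1)\bigr)
\]
whenever $\th_{-i}(W_{\wt R_i}) \subset W_n$; for the directed filtration, $\Phi$ additionally performs the deterministic comparison of birth locations against the cut-off $rn$, which for fixed $i$ is eventually independent of $n$.

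The remaining two steps are short. For the almost sure convergence, fix $i$: since $\wt R_i < \infty$ almost surely, for almost every $\omega$ there is $n_0 = n_0(\omega, i)$ with $\th_{-i}(W_{\wt R_i}) \subset W_n$ for all $n \ge n_0$, whence $\dnz^{r,s}$ is eventually constant, and I call this value $\De_{i,\infty}^{r,s}$; it is almost surely finite because it is dominated by a fixed power of the number of points of $\cP \cup \cP'$ in the almost surely bounded box $\th_{-i}(W_{\wt R_i})$, as in the proof of Lemma~\ref{diffBoundLem}. For the uniform estimate, fix $\e > 0$ and $(r,s) \in J$: if $|i| \le \kn - \e n$ then $\th_{-i}(W_{\wt R_i}) \subset W_n$ holds as soon as $\wt R_i \le 2\e n$, so that $\{\dnz^{r,s} \ne \De_{i,\infty}^{r,s}\} \subseteq \{\wt R_i > 2\e n\}$; by stationarity $\P(\wt R_i > 2\e n) = \P(\wt R_0 > 2\e n) \to 0$ (exponentially fast) as $n \to \infty$, and since this bound does not depend on $i$, taking the supremum over the admissible range of $i$ and letting $n \to \infty$ concludes.

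The main obstacle is the localization step, i.e.\ the construction of $\wt R_i$ and the verification that \emph{all} the data entering $\dnz^{r,s}$ can almost surely be read off from a bounded window. For the directed filtration this amounts to careful bookkeeping with the radii of stabilization; for the \v{C}- and VR-filtration it genuinely relies on the cylindrical geometry, since only the sub-criticality of continuum percolation in $\R \times A$ keeps the connected components of $\cK_T$ meeting $\th_{-i}(W_1)$ bounded in size, and thereby gives $\wt R_i$ exponential tails. The rest of the argument does not use the cylindrical assumption.
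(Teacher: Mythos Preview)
Your proposal is correct and follows essentially the same approach as the paper: both arguments reduce to the observation that, thanks to the (external) radius of stabilization, $\dnz^{r,s}$ becomes constant once the stabilization window $\th_{-i}(W_{R_i'})$ (or your enlarged $\th_{-i}(W_{\wt R_i})$) is contained in $W_n$, and then the uniform statement is obtained from stationarity and the exponential tails of $R_i'$. The paper's proof is a two-line sketch; your version spells out the localization and the uniform estimate in detail, and your remark that the cylindrical geometry enters only through the light tails of \eqref{cech_stab} matches the paper's emphasis exactly.
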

\begin{proof}
	In the case of a directed filtration, we have a radius of stabilization by assumption. In the case of the \v C- and VR-filtration, all the work for this result has already been done in Section \ref{proof_sec} through the construction of the radius of stabilization defined in \eqref{cech_stab}. In particular, if $n$ is so large that $\th_{-i}(W_{R_i}) \subset W_n$, then $\dnz$ remains unchanged and the uniform convergence follows from stationarity.
 \end{proof}

%COND VER 
\begin{proof}[Verifying the conditions of Theorem \ref{mcl_thm}]
$\phantom{a}$\\
	\textit{Condition (a) and (b):}
Lemma \ref{L:UniformBounded1} yields the boundedness of
\begin{align*}
	&\sup_{n\ge 1} \ n^{-1} \ \EE\Big[ \max_{|i| \le \kn}  D_{i, n} ^2 \Big] \le \sup_{n\ge 1} \ n^{-1} \ \sum_{|i|\le n/2} \EE\big[ D_{i, n}^2 \big] = O(1) \quad\text{ and } \quad n^{-2} \ \EE\Big[ \max_{|i| \le \kn}  D_{i, n}^4 \Big] \le n^{-2} \sum_{|i|\le n/2} \EE\big[ D_{i, n}^4 \big] = O(n^{-1}).
\end{align*}
\textit{Condition (c):} For the convergence requirement, it is enough to study the convergence of the expression
\begin{align}\label{E:MVN0}
 &n^{-1} \sum_{|i| \le \kn} \EE\big[ \bn^{r,s} - \bnz^{r,s} \ | \ \cG_{i} \big] \EE\big[ \bn^{u, v} - \bnz^{u, v} \ | \ \cG_{i}\big] 
\end{align}
for two possibly distinct pairs $(r, s), (u, v)$ in $J$. Using the weak stabilizing property from Lemma~\ref{L:WeakStabilization1} and the pointwise ergodic theorem, one finds with similar ideas as in the proof of Penrose and Yukich that the sum tends to
\begin{align}
  \gamma( (r, s), (u, v) ) := \E{ \EE\big[ \De_{0,\infty}^{r, s} | \cG_0 \big] \EE\big[ \De_{0, \infty}^{u, v} | \cG_0 \big]} \quad \text{a.s.~and in } \cL^1(\p). \label{Gamma_Def}
\end{align}
We sketch here the steps and refer for a detailed guidance to \cite{krebs2019asymptotic} in Proposition 5.5, Lemma 5.6 and Lemma 5.7. 
Now, the expression \eqref{E:MVN0} equals
\begin{align}
\label{E:MVN1}
  n^{-1} \sum_{|i|\le n/2} \Big\{ \EE\big[ \dnz^{r, s} \ | \ \cG_{i} \big] \ \EE\big[ \dnz^{u, v} \ | \ \cG_{i} \big] - \EE \big[\De_{i ,\infty}^{ r, s} \ | \ \cG_{i} \big] \EE\big[ \De_{i, \infty}^{ u, v} \ | \ \cG_{i} \big] \Big\}
	+ n^{-1} \sum_{|i|\le n/2} \EE[ \De_{i, \infty}^{r, s} \ | \ \cG_i ] \EE[ \De_{i ,\infty}^{u, v} \ | \ \cG_i ]. 
\end{align}
First, observe that the second sum converges to a $ \gamma( (r, s), (u, v) ) $ a.s.~and in $\cL^1(\p)$ by the ergodic theorem as the sequence $\big( \EE[ \De_{i,\infty}^{r,s} \ | \ \cG_i ] : i \in \Z\big)$ is stationary and ergodic for $r\le s$.

Moreover, H\"older's and Jensen's inequality, give for each summand in \eqref{E:MVN1} the following upper bound 
\begin{align*}
 \big\| \EE\big[ \dnz^{r,s} \ | \ \cG_{i} \big] \EE\big[ \dnz^{u, v} \ | \ \cG_{i} \big] - 
  \EE\big[ \De_{i,\infty}^{r, s} \ | \ \cG_{i} \big] \EE\big[ \De_{i,\infty}^{u, v} \ | \ \cG_{i} \big] \big\|_{\cL^1(\p)} 
	&\le \EE\big[ (\dnz^{r, s} - \De_{i,\infty}^{r, s} )^2 \big]^{1/2} \EE\big[ (\dnz^{u, v}) ^2\big]^{1/2} \\
	&\phantom\le+ \EE\big[ (\dnz^{u, v} - \De_{i,\infty}^{u, v} )^2 \big]^{1/2} \EE\big[ (\De_{i,\infty}^{r, s}) ^2 \big]^{1/2},
\end{align*}
where the right-hand side tends to 0 as $n \to \infty$ because of the uniform bounded moments condition from Lemma~\ref{L:UniformBounded1} and the weak stabilizing property from Lemma~\ref{L:WeakStabilization1}. Moreover, this convergence is even uniform in the sense made precise in Lemma \ref{L:WeakStabilization1}. Hence, \eqref{E:MVN1} converges in the $\cL^1$-norm to $\gamma( (r,s), (u,v))$, so the limit in Condition (c) is $\wt \s^2 = \sum_{i, j\le k} \a_i \a_j \ \gamma( (r_i, s_i), (r_j, s_j))$. 
 \end{proof}
\end{document}